\newtheorem{theorem}{\indent{Theorem}}[section]
\newtheorem{proposition}{\indent{Proposition}}[section]
\newtheorem{lemma}{\indent{Lemma}}[section]
\newtheorem{definition}{\indent{Definition}}[section]
\newtheorem{remark}{\indent{Remark}}[section]
\newtheorem{example}{\indent{Example}}[section]
\begin{document}

\pagenumbering{arabic}
\title
{Center-focus determination and limit cycles bifurcation for $p:q$ homogeneous weight singular point
   \thanks{This research was  supported by the National Natural Science Foundation of China (11371373) }}
\author{{Tao Liu}\\
{\small \it School of Mathematics,  Central South University, Changsha, Hunan, 410083, P.R. China}\\
{Feng Li}\\
{\small \it School of Science, Linyi University,Linyi ,Shandong, 276005, P.R. China}\\
{Yirong Liu}\\
{\small \it School of Mathematics,  Central South University,Changsha, Hunan, 410083, P.R. China}\\
{Shimin Li}\\
{\small \it School of Mathematics and Statistics, Guangdong University of Finance and Economics, }\\
{\small \it  Guangzhou, Guangdong, 510320, P.R. China}\\
}
\date{}
\maketitle \pagestyle{myheadings} \markboth{ Tao Liu, Feng Li, Yirong  Liu and Shimin Li
}{Center-focus determination and limit cycle bifurcation for $p:q$ homogeneous weight singular point} \noindent

\begin{abstract}
The quasi-homogeneous (and in general non-homogeneous) polynomial differential systems have
been studied from many different points of view. In this paper, Center-focus determination and limit cycles bifurcation for $p:q$ homogeneous weight singular point are investigated.
Some prosperities of Successive function and focus values are discussed, furthermore, the method of computing focal values is given. As an example, Center-focus determination and limit cycle bifurcation for $2:3$ homogeneous weight singular point are studied, three or five limit cycles in the neighborhood of origin can be obtained by different perturbations.

\end{abstract}

\noindent{\it MSC:} 34C05; 34C07

\noindent{\it Keywords:} \small Degenerate critical point,  limit cycle, center, homogeneous weight singular point.

\section{Introduction}
It is well known that center-focus determination is difficult and important in qualitative theory of planar system.  It is far from being solved although there were many results for elementary singular point and nilpotent singular point \cite{liu-2014}. Hopf bifurcation has also been investigated intensively because it is closely related to center-focus problem and the 16th problem of Hilbert. For planar ordinary differential equations, there were many good results for planar systems. For example,  one of the best-known results was $M(2)=3$ \cite{Bautin1952} for a planar system with an elementary critical point. Here, $M(n)$ denotes the maximal number of
small-amplitude limit cycles around a singular point with $n$ being the
degree of polynomials in the vector field. When $n = 3$,  the authors constructed two different cubic systems to show there exist $9$ limit cycles for cubic systems in \cite{Zoladek1995} and \cite{Christopher2005}.  Recently, Yu and Tian showed that there could be twelve limit cycles around a singular point in a planar cubic-degree polynomial system\cite{Yu2014}.
But for a system  with a degenerate critical points it is still a hard work to solve its center problem and to determine the number of limit cycles.
When a critical point is  degenerate, its center problem has also been investigated by many authors, see \cite{41,42,44,45,38,39}. There were also many results about the bifurcation of limit cycles \cite{43,46,47,48}, for more detail, see \cite{Liu2010,34}. A special system with total degenerate critical point was investigated by Liu etc. in \cite{liu2001}.

Some special systems have also been investigated. The homogeneous polynomial differential systems have been studied by several authors. Thus, the
quadratic homogeneous ones by \cite{12,18,24,25,26,27,29}; the cubic homogeneous ones by \cite{9}; the homogeneous systems of arbitrary degree by \cite{7,9,10,19}, and others. In these previous papers is described
an algorithm for studying the phase portraits of homogeneous polynomial vector fields for all degree, the classification of all phase portraits of homogeneous polynomial vector fields of degree 2
and 3, the algebraic classifications of homogeneous polynomial vector fields and the characterization
of structurally stable homogeneous polynomial vector fields.

The quasi-homogeneous (and in general non-homogeneous) polynomial differential systems have
been studied from many different points of view, mainly for their integrability \cite{3,4,14,15,16,17,22}, for their
rational integrability \cite{5,30,31,32}, for their polynomial integrability \cite{8,23,28}, for their centers \cite{1,2,20},
for their normal forms \cite{6}, for their limit cycles \cite{21}, ... . But up to now there was not an algorithm
for constructing all the quasi-homogeneous polynomial differential systems of a given degree. Han and xiong  classified all centers of a class of quasi-homogeneous polynomial differential systems of degree 5 in \cite {han2015}. The same authors  investigated a class of quasi-homogeneous polynomial systems with a given weight degree in \cite{han2016}.  The cyclicity and center problems are studied for some subfamilies of semi-quasihomogeneous polynomial systems by Zhao in \cite{zhao2013}.

In this paper, center-focus determination and limit cycle bifurcation for $p:q$ homogeneous weight singular point will be investigated. In section two, homogeneous weight system and generalized polar coordinate are given; Some prosperities of successive function and focus values are discussed in section three; As application, center-focus determination and limit cycle bifurcation for $2:3$ homogeneous weight singular point are investigated
in section four.

\section{Homogeneous weight system and generalized Polar Coordinate}

 \setcounter{equation}{0}
 In this section, some necessary definitions are given.
\begin{definition}\label{d1.1}
If there exist positive integer $p,q,m$, which satisfy
\begin{equation}\label{e1.1}
    F(\lambda^px,\lambda^qy)\equiv\lambda^mF(x,y),
\end{equation}
then $F(x,y)$ is called to be $m-$order homogeneous weight function of $x,y$  with  weight $p,q$.
\end{definition}

It is easy to testify that if $p,q,m$ are positive integers and $F(x,y)$ is a $m-$order homogeneous weight polynomial function of $x,y$  with  weight $p,q$, then $F(x,y)$ can be written as
\begin{equation}\label{e1.2}
    F(x,y)=\sum_{kp+jq=m}C_{kj}x^ky^j.
\end{equation}

Considering the following system
 \begin{equation}\label{e1.3}
    \frac{dx}{dt}=-\lambda_1y^{2p-1}+\sum_{k+j=2}^{\infty}a_{kj}x^ky^j,\ \
     \frac{dy}{dt}=\lambda_2x^{2q-1}+\sum_{k+j=2}^{\infty}b_{kj}x^ky^j,
 \end{equation}
where  $p,q$ are positive integer,\ $a_{0,2p-1}=b_{2q-1,0}=0$,\ $\lambda_1>0,\lambda_2>0$.\ Suppose functions of right hand of system \eqref{e1.3} are power series of $x,y$ with non-zero radius convergence.\par
Without loss of generality, let
 \begin{equation}\label{e1.4}
    \lambda_1=p,\ \lambda_2=q,
 \end{equation}
otherwise, let
 \begin{equation}\label{e1.5}
    x=\sqrt[2q]{\frac{q}{\lambda_2}}\ u,\ y=\sqrt[2p]{\frac{p}{\lambda_1}}\ v,\ \
    \frac{dt}{d\tau}= \sqrt[2p]{\frac{p}{\lambda_1}}\ \sqrt[2q]{\frac{q}{\lambda_2}}.
 \end{equation}
When \eqref{e1.4} holds, system \eqref{e1.3} could be rewritten as
 \begin{equation}\label{e1.6}
    \frac{dx}{dt}=-py^{2p-1}+\sum_{k+j=2}^{\infty}a_{kj}x^ky^j,\ \
     \frac{dy}{dt}=qx^{2q-1}+\sum_{k+j=2}^{\infty}b_{kj}x^ky^j,
 \end{equation}

 The origin of system \eqref{e1.6} is an elementary singular point when $p=q=1$ and a nilpotent singular points when $p=1,\ q>1$(or $p>1$, $q=1$), there are many differential topological constructions of phase curves in the neighborhood of origin of system \eqref{e1.6}, see \cite{Zhang-1985}. When $p>1,q>1$, topological constructions of phase curves in the neighborhood of origin of system \eqref{e1.6} has not been investigated completely. \par
 In this paper, we do not consider the topological constructions of phase curves in the neighborhood of origin of system \eqref{e1.6}.
For sufficiently small $h>0$, the solution of system \eqref{e1.6} which satisfy initial condition $x|_{t=0}=h^p,\ y|_{t=0}=0$ goes around the origin at the neighborhood of $x^{2q}+y^{2p}=h^{2pq}$,
then  phase curves in the neighborhood of origin of system \eqref{e1.6} can be studied by transformations
\begin{equation}\label{e1.7}
    x=r^p\cos\theta,\ \ y=r^q\sin\theta
\end{equation}
So the system could be written as
\begin{equation}\label{e1.8}
\begin{split}
    &\frac{dx}{dt}=-py^{2p-1}+\sum_{kp+jq> \atop
                                     (2p-1)q}
    ^{\infty}a_{kj}x^ky^j=\mathcal{X}(x,y),\\
    & \frac{dy}{dt}=qx^{2q-1}+\sum_{kp+jq>\atop
                                     (2q-1)p}^{\infty}b_{kj}x^ky^j=\mathcal{Y}(x,y).
     \end{split}
 \end{equation}

\begin{definition}\label{d1.2}
system \eqref{e1.8} is called to be $p:q$ homogeneous weight system, and the origin of system \eqref{e1.8} is called to be homogeneous weight focus (or center) with weight $p:q$.
\end{definition}

 \begin{example}\label{ex1.2}
 Homogeneous weight system with weight $2:3$ can be written as
  \begin{equation}\label{e1.9}
  \begin{split}
&\frac{dx}{dt} =-2y^3+\sum_{2k+3j=10}^{\infty}a_{kj}x^ky^j\\
= &-2y^3+(a_{22}x^2y^2+a_{13}xy^3+a_{04}y^4)+\sum_{k+j=5}^{\infty}a_{kj}x^ky^j,\\
   &\frac{dy}{dt} =3x^5+\sum_{2k+3j=11}^{\infty}b_{kj}x^ky^j=3x^5+(b_{13}xy^3+b_{04}y^4)\\
    +&(b_{41}x^4y+b_{32}x^3y^2+b_{23}x^2y^3+b_{14}xy^4+b_{05}y^5)+\sum_{k+j=6}^{\infty}b_{kj}x^ky^j.
     \end{split}
  \end{equation}
 \end{example}

Now, the functions of right hand of system \eqref{e1.8} can be written as a homogeneous weight polynomial power seriesㄩ
\begin{equation}\label{e1.10}
    \begin{split}
    &\frac{dx}{dt}=-py^{2p-1}+\sum_{m=2pq-q+1}^{\infty}\mathcal{X}_m(x,y)=\mathcal{X}(x,y),\\
    &\frac{dy}{dt}=qx^{2q-1}+\sum_{m=2pq-p+1}^{\infty}\mathcal{Y}_m(x,y)=\mathcal{Y}(x,y),
    \end{split}
\end{equation}
 where
 \begin{equation}\label{e1.11}
   \mathcal{X}_m(x,y)=\sum_{kp+jq=m}a_{kj}x^ky^j,\ \   \mathcal{Y}_m(x,y)=\sum_{kp+jq=m}b_{kj}x^ky^j
 \end{equation}
are $m-$order homogeneous weight polynomial of $x,y$  with  weight $p,q$ which satisfy
 \begin{equation}\label{e1.12}
\begin{split}
    &\mathcal{X}_m(r^p\cos\theta,r^q\sin\theta)=r^m\mathcal{X}_m(\cos\theta,\sin\theta),\\
    &\mathcal{Y}_m(r^p\cos\theta,r^q\sin\theta)=r^m\mathcal{Y}_m(\cos\theta,\sin\theta).
    \end{split}
\end{equation}
Taking the derivative of \eqref{e1.7} with $t$, we have
\begin{equation}\label{e1.13}
    \begin{split}
    &\mathcal{X}=p\ r^{p-1}\cos\theta \frac{dr}{dt}-r^p\sin\theta \frac{d\theta}{dt}\\
    &\mathcal{Y}=q\ r^{q-1}\sin\theta \frac{dr}{dt}+r^q\cos\theta \frac{d\theta}{dt}.
    \end{split}
\end{equation}
\eqref{e1.13} yields that
\begin{equation}\label{e1.14}
    \begin{split}
    &\frac{dr}{dt}=r\ \frac{r^q\cos\theta \mathcal{X}+r^p\sin\theta \mathcal{Y}}{r^{p+q}(p\cos^2\theta+q\sin^2\theta)}
    =\frac{r^{2pq+1}}{{r^{p+q}(p\cos^2\theta+q\sin^2\theta)}}\sum\limits_{k=0}^{\infty}R_k(\theta)r^k ,\\
    &\frac{d\theta}{dt}=\frac{-qr^q\sin\theta \mathcal{X}+pr^p\cos\theta \mathcal{Y}}{r^{p+q}(p\cos^2\theta+q\sin^2\theta)}
      = \frac{r^{2pq}}{{r^{p+q}(p\cos^2\theta+q\sin^2\theta)}}\sum\limits_{k=0}^{\infty}Q_k(\theta)r^k.
    \end{split}
\end{equation}
where
\begin{equation}\label{e1.15}
\begin{split}
   & R_k(\theta)=\cos\theta\ \mathcal{X}_{2pq-q+k}(\cos\theta,\sin\theta)+\sin\theta\ \mathcal{Y}_{2pq-p+k}(\cos\theta,\sin\theta),\\
   &Q_k(\theta)=-q\sin\theta\ \mathcal{X}_{2pq-q+k}(\cos\theta,\sin\theta)+p\cos\theta\ \mathcal{Y}_{2pq-p+k}(\cos\theta,\sin\theta),
   \end{split}
\end{equation}
Especially,
\begin{equation}\label{e1.16}
\begin{split}
     &R_0(\theta)=\cos\theta\sin\theta\left(q\cos^{2q-2}\theta-p\sin^{2p-2}\theta\right),\\
     &Q_0(\theta)=pq\left(\cos^{2q}\theta+\sin^{2p}\theta\right)>0.
     \end{split}
\end{equation}

\eqref{e1.14} yields that by transformation \eqref{e1.7} system \eqref{e1.10} could be rewritten as the following equation:
\begin{equation}\label{e1.17}
    \frac{dr}{d\theta}=r\ \frac{\sum\limits_{k=0}^{\infty}R_k(\theta)r^k}{\sum\limits_{k=0}^{\infty}Q_k(\theta)r^k}
    =\frac{R_0(\theta)}{Q_0(\theta)}\ r+o(r).
\end{equation}
because $Q_0(\theta)\neq0$, for sufficiently small $h$, the solution of system \eqref{e1.17} which satisfy initial condition
\begin{equation}\label{e1.18}
    r|_{\theta=0}=h
\end{equation}
is a power series of $h$ with non-zero radius convergence when $|\theta|<4\pi$. Let
\begin{equation}\label{e1.19}
    r=\tilde{r}(\theta,h)=\sum_{k=1}^{\infty}\nu_k(\theta)h^k.
\end{equation}
where
\begin{equation}\label{e1.20}
    \nu_1(0)=1,\ \ \nu_k(0)=0,\ k=2,3,\cdots.
\end{equation}

Furthermore
\begin{equation}\label{e1.21}
    \nu_1(\theta)=\exp\int_0^{\theta}\frac{R_0(\theta)}{Q_0(\theta)}d\theta
    =\left(\cos^{2q}\theta+\sin^{2p}\theta\right)^{\frac{-1}{2pq}}.
\end{equation}
So the origin of system \eqref{e1.10} is a focus or center, the Poincar\'{e} successive function in the neighborhood of the origin can be written as
\begin{equation}\label{e1.22}
    \triangle(h)=\tilde{r}(2\pi,h)-h=\sum_{k=2}^{\infty}\nu_k(2\pi)h^k.
\end{equation}

\begin{remark}\label{r1.1}
If $p ,q$ are not relatively prime, namely, there exists  positive constants $d\ ,p^*\ ,q^*$ which satisfy
\begin{equation}\label{1.23}
    p=dp^*,\ q=dq^*,
\end{equation}
where $d>1$.
Then the transformation \eqref{e1.7} is equivalent to
\begin{equation}\label{e1.24}
    x=(r^*)^{p^*}\cos\theta,\ \ y=(r^*)^{q^*}\sin\theta
\end{equation}
where
\begin{equation}\label{e1.25}
    r^*=r^d.
\end{equation}
and the system \eqref{e1.17} could be changed into equation of polar coordinates by transformation \eqref{e1.25}.
\end{remark}

\section{Some prosperities of Successive function and focus values}
 \setcounter{equation}{0}

Because the functions of right hand of system \eqref{e1.17} is periodic function of $\theta$ with period $2\pi$,
\begin{proposition}\label{p2.1}
For sufficiently small constant $h$, when $|\theta|<4\pi$, we have
\begin{equation}\label{e2.1}
    \tilde{r}(\theta+2\pi,h)=\tilde{r}(\theta,\tilde{r}(2\pi,h)).
\end{equation}
\end{proposition}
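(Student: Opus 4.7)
The plan is to reduce the claimed identity to the uniqueness of solutions of the ODE \eqref{e1.17}, exploiting the $2\pi$-periodicity of its right-hand side in $\theta$. Write the equation \eqref{e1.17} in the abbreviated form
\begin{equation*}
\frac{dr}{d\theta}=F(\theta,r),\qquad F(\theta,r):=r\,\frac{\sum_{k\ge 0}R_k(\theta)r^k}{\sum_{k\ge 0}Q_k(\theta)r^k},
\end{equation*}
and note that, because each $R_k,Q_k$ is a polynomial in $\cos\theta,\sin\theta$, the function $F$ satisfies $F(\theta+2\pi,r)=F(\theta,r)$ for every $(\theta,r)$ in its domain, and $Q_0(\theta)>0$ guarantees that $F$ is analytic in a neighborhood of $\{r=0\}$.

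Next, I would define the two candidate solutions whose equality is to be shown. Let
\begin{equation*}
\phi(\theta):=\tilde r(\theta+2\pi,h),\qquad \psi(\theta):=\tilde r\bigl(\theta,\tilde r(2\pi,h)\bigr).
\end{equation*}
Because $\tilde r(\cdot,h)$ solves $r'=F(\theta,r)$, the chain rule together with the periodicity of $F$ gives
\begin{equation*}
\phi'(\theta)=F(\theta+2\pi,\tilde r(\theta+2\pi,h))=F(\theta,\phi(\theta)),
\end{equation*}
and by the definition of $\tilde r$, $\psi'(\theta)=F(\theta,\psi(\theta))$. Both functions take the same value at $\theta=0$:
\begin{equation*}
\phi(0)=\tilde r(2\pi,h)=\psi(0),
\end{equation*}
the second equality being the initial condition \eqref{e1.18} applied with $h$ replaced by $\tilde r(2\pi,h)$. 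Hence $\phi$ and $\psi$ are two solutions of the same initial value problem, and the standard uniqueness theorem for analytic ODEs yields $\phi\equiv\psi$, which is precisely \eqref{e2.1}.

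The only subtlety, and the step where one must be careful, is ensuring that both $\phi$ and $\psi$ are actually defined and stay in the domain of analyticity of $F$ throughout $|\theta|<4\pi$. For $\phi$ this follows immediately from the existence of $\tilde r(\theta,h)$ for $|\theta|<4\pi$, already recorded just after \eqref{e1.18}. For $\psi$ one uses that $\tilde r(2\pi,h)=h+O(h^2)$ by \eqref{e1.19}--\eqref{e1.20}, so for $h$ sufficiently small the new initial value $\tilde r(2\pi,h)$ lies in the disk of convergence of the power-series solution from \eqref{e1.19}, guaranteeing existence of $\psi$ on $|\theta|<4\pi$ as well. With both solutions defined on the common interval, uniqueness closes the argument.
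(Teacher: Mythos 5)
Your proof is correct and is essentially the argument the paper intends: the paper justifies Proposition \ref{p2.1} only by the one-line remark that the right-hand side of \eqref{e1.17} is $2\pi$-periodic in $\theta$, and your write-up simply makes that explicit by exhibiting $\tilde r(\theta+2\pi,h)$ and $\tilde r(\theta,\tilde r(2\pi,h))$ as solutions of the same initial value problem and invoking uniqueness, exactly the template the paper itself uses for Propositions \ref{p2.2}, \ref{p2.4} and \ref{p2.5}. The only (shared, minor) imprecision is that defining $\tilde r(\theta+2\pi,h)$ for all $|\theta|<4\pi$ strictly requires the solution on $|\theta|<6\pi$, but that is a defect of the stated range rather than of your argument.
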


\begin{proposition}\label{p2.2}
If $p,q$ are prime numbers, for sufficiently small constant $h$, when $|\theta|<4\pi$, we have
\begin{equation}\label{e2.2}
    -\tilde{r}(\theta+\pi,h)=\tilde{r}(\theta,-\tilde{r}(\pi,h)).
\end{equation}
\end{proposition}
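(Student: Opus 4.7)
Set $\phi(\theta) := -\tilde{r}(\theta + \pi, h)$ and $\psi(\theta) := \tilde{r}(\theta, -\tilde{r}(\pi, h))$. The plan is to realize both $\phi$ and $\psi$ as solutions of the analytic initial value problem attached to equation \eqref{e1.17} sharing the same value at $\theta = 0$, and then invoke uniqueness of the power series solution in $h$. By the initial condition \eqref{e1.20} one has $\psi(0) = -\tilde{r}(\pi, h) = \phi(0)$, and $\psi$ solves \eqref{e1.17} by construction, so the real work is to show that $\phi$ also solves \eqref{e1.17}.

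The crux is the $\pi$-shift identity
$$R_k(\theta + \pi) = (-1)^k R_k(\theta), \qquad Q_k(\theta + \pi) = (-1)^k Q_k(\theta) \quad (k \geq 0).$$
Primality of $p, q$ enters only to force both to be odd (the even prime $2$ must be excluded). Once $p, q$ are odd, the weight relation $ip + jq = m$ forces $i + j \equiv m \pmod{2}$, so $\mathcal{X}_m(-x, -y) = (-1)^m \mathcal{X}_m(x, y)$ and $\mathcal{Y}_m(-x, -y) = (-1)^m \mathcal{Y}_m(x, y)$ from \eqref{e1.11}. The indices appearing in \eqref{e1.15} are $m = 2pq - q + k$ and $m = 2pq - p + k$, both of parity $k + 1$ since $2pq$ is even and $p, q$ are odd. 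Substituting $\cos(\theta + \pi) = -\cos\theta$, $\sin(\theta + \pi) = -\sin\theta$ into \eqref{e1.15} and pulling out this common sign yields the displayed identity after two $(-1)$'s cancel.

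Given the $\pi$-shift identity, I would differentiate $\phi$ and rewrite the ODE satisfied by $\tilde{r}(\cdot + \pi, h)$ in terms of $\phi$:
$$\phi'(\theta) = -\tilde{r}'(\theta + \pi, h) = -\tilde{r} \cdot \frac{\sum_{k \geq 0} R_k(\theta + \pi)\, \tilde{r}^{k}}{\sum_{k \geq 0} Q_k(\theta + \pi)\, \tilde{r}^{k}} = -\tilde{r} \cdot \frac{\sum_{k \geq 0} (-1)^k R_k(\theta)\, \tilde{r}^{k}}{\sum_{k \geq 0} (-1)^k Q_k(\theta)\, \tilde{r}^{k}}.$$
Grouping the signs as $(-1)^k \tilde{r}^k = (-\tilde{r})^k = \phi^k$ and replacing the prefactor $-\tilde{r}$ by $\phi$, this collapses to $\phi'(\theta) = \phi \cdot \sum_{k \geq 0} R_k(\theta) \phi^k / \sum_{k \geq 0} Q_k(\theta) \phi^k$, i.e.\ exactly \eqref{e1.17}. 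Thus $\phi$ and $\psi$ satisfy the same analytic ODE from the same initial value, and uniqueness in the analytic category yields $\phi \equiv \psi$ on $|\theta| < 4\pi$ for sufficiently small $h$.

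The main obstacle is the parity bookkeeping in the $\pi$-shift identity: one has to check that the $(-1)^m$ pulled out of $\mathcal{X}_m, \mathcal{Y}_m$ combines correctly with the $(-1)$ produced by $\cos, \sin$ at $\theta + \pi$ to leave exactly $(-1)^k$. It is worth flagging that this identity genuinely fails when one of $p, q$ equals $2$, since then $i + j \pmod{2}$ is no longer determined by $m$ alone; hence the hypothesis that both weights are odd is essential, which is precisely what ``prime numbers'' delivers once one reads it as excluding the even prime.
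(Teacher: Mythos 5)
Your proof is correct and follows essentially the same route as the paper's: establish that equation \eqref{e1.17} is invariant under $r\to -r$, $\theta\to\theta+\pi$ when $p,q$ are odd, so that $-\tilde r(\theta+\pi,h)$ and $\tilde r(\theta,-\tilde r(\pi,h))$ solve the same initial value problem and must coincide by uniqueness. The only difference is that you verify the invariance explicitly via the parity identities $R_k(\theta+\pi)=(-1)^kR_k(\theta)$ and $Q_k(\theta+\pi)=(-1)^kQ_k(\theta)$ --- a step the paper dismisses as ``easy to testify'' --- and you rightly flag that the hypothesis must be read as ``$p,q$ odd'' rather than literally ``prime,'' since the argument (and the statement itself) fails when one of the weights equals $2$.
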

\begin{proof}
Because $p,q$ are prime numbers, the transformation \eqref{e1.7} is equivalent to
\begin{equation}\label{e2.3}
    x=(-r)^p\cos(\theta+\pi),\ \ y=(-r)^q\sin(\theta+\pi).
\end{equation}
It is easy to testify that system \eqref{e1.17} keep formally unchanged by transformation $r\rightarrow -r,\ \theta\rightarrow\theta+\pi$,
so $r=-\tilde{r}(\theta+\pi,h)$ is a solution of \eqref{e1.17} which satisfy initial condition $r|_{\theta=0}=-\tilde{r}(\pi,h)$.\
On the other hand, $r=\tilde{r}(\theta,-\tilde{r}(\pi,h))$ is another solution of \eqref{e1.17} which satisfy the same initial condition.
So we can get \eqref{e2.2} easily by uniqueness theorem for the solution.
\end{proof}

\begin{proposition}\label{p2.3}
If $p, q$ are even number, then $\tilde{r}(\theta,h)$ is an odd function of  $h$.
\end{proposition}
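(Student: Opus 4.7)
The plan is to exploit the fact that the coordinate change $x = r^p\cos\theta,\ y = r^q\sin\theta$ becomes invariant under $r\mapsto -r$ (at fixed $\theta$) when both exponents $p,q$ are even. This symmetry should promote to an invariance of the ODE \eqref{e1.17} under $r\mapsto -r$, after which uniqueness of the IVP \eqref{e1.18} will force $\tilde r(\theta,h) = -\tilde r(\theta,-h)$, i.e.\ oddness in $h$.

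First I would establish the structural claim that $R_k(\theta)\equiv Q_k(\theta)\equiv 0$ for every odd $k$. By \eqref{e1.11}, the homogeneous weight polynomial $\mathcal{X}_m(x,y)=\sum_{ip+jq=m}a_{ij}x^iy^j$ is identically zero unless $m$ admits a decomposition $m=ip+jq$ with $i,j\ge 0$. Since $p,q$ are both even, the sum $ip+jq$ is always even, so $\mathcal{X}_m\equiv\mathcal{Y}_m\equiv 0$ whenever $m$ is odd. Now $2pq-q=q(2p-1)$ and $2pq-p=p(2q-1)$ are both even (each is an even integer times an odd one), hence the indices $2pq-q+k$ and $2pq-p+k$ appearing in \eqref{e1.15} have the same parity as $k$. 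For $k$ odd both vanish, proving the claim.

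With this in hand, the numerator $\sum_{k\ge 0}R_k(\theta)r^k$ and the denominator $\sum_{k\ge 0}Q_k(\theta)r^k$ of \eqref{e1.17} contain only even powers of $r$, so their quotient is an even function of $r$. Consequently the right-hand side $F(r,\theta):= r\cdot\sum_k R_k r^k/\sum_k Q_k r^k$ of \eqref{e1.17} is odd in $r$. Setting $\phi(\theta):=-\tilde r(\theta,-h)$ one checks $\phi(0)=-\tilde r(0,-h)=h$, and by the chain rule combined with the oddness of $F$, $\phi'(\theta) = -F(\tilde r(\theta,-h),\theta) = F(-\tilde r(\theta,-h),\theta) = F(\phi(\theta),\theta)$. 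Thus $\phi$ solves the same initial value problem as $\tilde r(\cdot,h)$, so uniqueness gives $\tilde r(\theta,h)=-\tilde r(\theta,-h)$. Equivalently, $\nu_k(\theta)\equiv 0$ for every even $k$ in the expansion \eqref{e1.19}.

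The only delicate step is the parity bookkeeping of $2pq-q$ and $2pq-p$ together with the observation that monomials $x^iy^j$ of even $p,q$-weight are forced to have even total weight; everything else reduces to routine manipulation of the power series \eqref{e1.17} and the standard uniqueness theorem. I anticipate no further obstacle, since the leading coefficients $R_0,Q_0$ in \eqref{e1.16} trivially conform to the evenness pattern (they are the $k=0$ case).
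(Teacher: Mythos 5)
Your proof is correct and follows essentially the same route as the paper: both arguments show that the right-hand side of \eqref{e1.17} is odd in $r$ and then invoke uniqueness of the initial value problem. The only cosmetic difference is that you derive the evenness of the numerator and denominator by parity bookkeeping on the indices of $R_k,Q_k$, whereas the paper observes directly that $x=r^p\cos\theta$, $y=r^q\sin\theta$ are invariant under $r\mapsto-r$ when $p,q$ are even.
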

\begin{proof}
 If $p, q$ are even number, then $X(r^p\cos\theta,r^q\sin\theta)$ and $Y(r^p\cos\theta,r^q\sin\theta)$ are even functions of $r$, so the functions of right han
 of system \eqref{e1.17} is an odd function of $r$, which shows that Proposition \ref{p2.2} holds.
\end{proof}

\begin{proposition}\label{p2.4}
 If $p$ is an odd number, $q$ is an even number, for sufficiently small constant $h$, when $|\theta|<4\pi$, we have
 \begin{equation}\label{e2.4}
    -\tilde{r}(\pi-\theta,h)=\tilde{r}(\theta,-\tilde{r}(\pi,h)).
 \end{equation}
 \end{proposition}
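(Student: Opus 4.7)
The plan is to mirror the proof of Proposition \ref{p2.2}, replacing the symmetry $(r,\theta)\mapsto(-r,\theta+\pi)$ (which works when both $p,q$ are odd) by the new symmetry $(r,\theta)\mapsto(-r,\pi-\theta)$ that is adapted to the parities $p$ odd, $q$ even. First I would verify that the generalized polar parameterization \eqref{e1.7} is invariant under this new involution: with $p$ odd we have $(-r)^{p}\cos(\pi-\theta)=-r^{p}\cdot(-\cos\theta)=r^{p}\cos\theta=x$, and with $q$ even we have $(-r)^{q}\sin(\pi-\theta)=r^{q}\sin\theta=y$. Hence $(x,y)$ and therefore $\mathcal{X}(x,y),\mathcal{Y}(x,y)$ are unchanged by the substitution $r\mapsto -r,\ \theta\mapsto\pi-\theta$.

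Next I would check that equation \eqref{e1.17} is formally invariant under this substitution. Using Cramer's rule on \eqref{e1.13} one has
\begin{equation*}
\frac{dr}{d\theta}=\frac{r^{q}\cos\theta\,\mathcal{X}+r^{p}\sin\theta\,\mathcal{Y}}{p\,r^{p-1}\cos\theta\,\mathcal{Y}-q\,r^{q-1}\sin\theta\,\mathcal{X}}.
\end{equation*}
Substituting $r\mapsto -r,\ \theta\mapsto\pi-\theta$ and exploiting that $p,p-1$ have parities (odd, even) while $q,q-1$ have parities (even, odd), the numerator picks up an overall factor $-1$ (from $(-r)^{p}\sin\theta$ and $(-\cos\theta)\cdot r^{q}$), and the denominator also picks up an overall factor $-1$ (from $-\cos\theta\cdot r^{p-1}$ and $-r^{q-1}\sin\theta$). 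The two sign changes cancel, so the right-hand side is preserved. Therefore if $r=\tilde{r}(\theta,h)$ solves \eqref{e1.17}, so does $r=-\tilde{r}(\pi-\theta,h)$.

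Finally I would close the argument by uniqueness of the Cauchy problem. Setting $\theta=0$ in $r=-\tilde{r}(\pi-\theta,h)$ gives the initial value $r|_{\theta=0}=-\tilde{r}(\pi,h)$, which coincides with the initial value of the solution $r=\tilde{r}(\theta,-\tilde{r}(\pi,h))$ of \eqref{e1.17}. Since $Q_{0}(\theta)\neq 0$ guarantees Lipschitz continuity of the right-hand side for small $|r|$, uniqueness yields \eqref{e2.4}. The only delicate step is the bookkeeping of signs in Step~2; once the parities of $p,p-1,q,q-1$ are tracked correctly the cancellation is automatic, so I do not expect a genuine obstacle beyond careful computation.
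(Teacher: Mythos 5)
Your proof is correct and follows essentially the same route as the paper's: both exploit the involution $r\mapsto -r$, $\theta\mapsto\pi-\theta$, under which the parameterization \eqref{e1.7} (for $p$ odd, $q$ even) and hence equation \eqref{e1.17} are formally invariant, and then conclude by uniqueness of the solution with initial value $r|_{\theta=0}=-\tilde{r}(\pi,h)$. The only difference is that you carry out explicitly the sign bookkeeping that the paper dismisses as ``easy to testify,'' which is a welcome addition rather than a deviation.
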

\begin{proof}
 Because $p$ is an odd number, $q$ is an even number, the transformation \eqref{e1.7} is equivalent to
\begin{equation}\label{e2.5}
    x=(-r)^p\cos(\pi-\theta),\ \ y=(-r)^q\sin(\pi-\theta).
\end{equation}
It is easy to testify that system \eqref{e1.17} keep formally unchanged by transformation $r\rightarrow -r,\ \theta\rightarrow\pi-\theta$, so
 $r=-\tilde{r}(\pi-\theta,h)$ is a solution of \eqref{e1.17} which satisfy initial condition $r|_{\theta=0}=-\tilde{r}(\pi,h)$.\
 on the other hand, $r=\tilde{r}(\theta,-\tilde{r}(\pi,h))$ is another solution of  \eqref{e1.17} which satisfy the same initial condition.
So we can get \eqref{e2.5} easily by uniqueness theorem for the solution.
\end{proof}

\begin{proposition}\label{p2.5}
 If $p$ is a even number, $q$ is an odd number, for sufficiently small constant $h$, when $|\theta|<4\pi$, we have
 \begin{equation}\label{e2.6}
    -\tilde{r}(\pi-\theta,h)=\tilde{r}(\theta,-\tilde{r}(\pi,h)).
 \end{equation}
 \end{proposition}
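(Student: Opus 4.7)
The plan mirrors the proof of Proposition 2.4. I would first exhibit an involution of $(r,\theta)$ that leaves equation \eqref{e1.17} formally invariant and carries the reference solution $r=\tilde r(\theta,h)$ to a second solution whose initial value at $\theta=0$ is $-\tilde r(\pi,h)$; the identity \eqref{e2.6} will then follow from uniqueness of solutions of the initial value problem for \eqref{e1.17}, which is guaranteed on $|\theta|<4\pi$ by $Q_0(\theta)>0$ and the analyticity of the right-hand side in $r$.

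Concretely, under the hypothesis that $p$ is even and $q$ is odd, the candidate involution is $(r,\theta)\mapsto(-r,\pi-\theta)$, the same involution used in Proposition 2.4. One first argues that the generalized polar transformation \eqref{e1.7} is equivalent to
\[
x=(-r)^p\cos(\pi-\theta),\qquad y=(-r)^q\sin(\pi-\theta),
\]
in the direct analogue of \eqref{e2.5}. Granted this equivalence, the coefficients $R_k(\theta),Q_k(\theta)$ defined in \eqref{e1.15} depend on $\theta$ only through the weight-homogeneous evaluations $\mathcal X_m(r^p\cos\theta,r^q\sin\theta)$ and $\mathcal Y_m(r^p\cos\theta,r^q\sin\theta)$, so \eqref{e1.17} is formally invariant under the substitution. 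Applying the symmetry to the known solution $r=\tilde r(\theta,h)$, $\tilde r(0,h)=h$, produces the second solution $r=-\tilde r(\pi-\theta,h)$, whose initial value at $\theta=0$ is $-\tilde r(\pi,h)$. Since $r=\tilde r(\theta,-\tilde r(\pi,h))$ is by definition another solution of \eqref{e1.17} with the same initial value, uniqueness forces the two to coincide, giving \eqref{e2.6}.

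The principal technical obstacle lies in verifying the alternative polar representation in the mixed-parity case. Because $\cos(\pi-\theta)=-\cos\theta$ and $p$ is even, the factor $(-r)^p\cos(\pi-\theta)$ naively reads $-r^p\cos\theta$ rather than $r^p\cos\theta$, so one cannot simply copy the pointwise identity behind \eqref{e2.5}. The verification should instead be done monomial by monomial: for each weighted monomial $a_{kj}x^ky^j$ or $b_{kj}x^ky^j$ entering \eqref{e1.10} via \eqref{e1.11}, the constraint $kp+jq=m$ combined with the parities of $p$ and $q$ forces the exponent pattern $(k,j)$ to a definite parity class, and I would check that the joint action $r\mapsto -r$, $\theta\mapsto\pi-\theta$ — equivalently $(x,y)\mapsto(-x,-y)$ together with a reflected parametrization — absorbs the remaining sign changes in \eqref{e1.15} and \eqref{e1.17}. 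Once this parity accounting is settled, the closure by the uniqueness theorem proceeds mechanically, exactly parallel to Propositions 2.2 and 2.4.
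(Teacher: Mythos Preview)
Your plan has a genuine gap: the involution $(r,\theta)\mapsto(-r,\pi-\theta)$ does \emph{not} leave equation \eqref{e1.17} invariant when $p$ is even and $q$ is odd, and the monomial-by-monomial parity bookkeeping you propose cannot repair this. You correctly observed that with these parities the map sends $(x,y)=(r^p\cos\theta,\,r^q\sin\theta)$ to $(-x,-y)$; but the higher-order terms $\mathcal X_m,\mathcal Y_m$ of \eqref{e1.10} carry no symmetry under $(x,y)\mapsto(-x,-y)$ in general. The weight constraint $kp+jq=m$ with $p$ even and $q$ odd pins down only the parity of $j$, not of $k$ (and hence not of $k+j$): already for $p=2$, $q=3$, $m=10$ one has $(k,j)=(5,0)$ and $(2,2)$ with $k+j$ of opposite parities, so a nonzero $a_{22}$ in the example system \eqref{e3.1} destroys the hoped-for invariance. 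Consequently $-\tilde r(\pi-\theta,h)$ is in general \emph{not} a solution of \eqref{e1.17}, and the uniqueness argument never gets off the ground.

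The paper's proof uses a different involution, namely $(r,\theta)\mapsto(-r,\,2\pi-\theta)$. With $p$ even and $q$ odd one has $(-r)^p\cos(2\pi-\theta)=r^p\cos\theta$ and $(-r)^q\sin(2\pi-\theta)=r^q\sin\theta$, so this involution fixes $(x,y)$ pointwise and therefore leaves \eqref{e1.17} invariant for \emph{every} system of the form \eqref{e1.10}, with no parity check needed. The identity actually established is
\[
-\tilde r(2\pi-\theta,h)=\tilde r\bigl(\theta,\,-\tilde r(2\pi,h)\bigr),
\]
and this is precisely what is invoked in Theorem~\ref{t2.3} (setting $\theta=2\pi$ yields \eqref{e2.24}). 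The ``$\pi$'' appearing in the displayed statement \eqref{e2.6} is a typographical slip for ``$2\pi$''; you were trying to prove the literal printed identity, which is false in general.
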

\begin{proof}
 Because $p$ is an even number, $q$ is an odd number, the transformation \eqref{e1.7} is equivalent to
\begin{equation}\label{e2.7}
    x=(-r)^p\cos(2\pi-\theta),\ \ y=(-r)^q\sin(2\pi-\theta).
\end{equation}
It is easy to testify that system \eqref{e1.17} keep formally unchanged by transformation $r\rightarrow -r,\ \theta\rightarrow2\pi-\theta$, so
 $r=-\tilde{r}(2\pi-\theta,h)$ is a solution of \eqref{e1.17} which satisfy initial condition $r|_{\theta=0}=-\tilde{r}(2\pi,h)$.\
 on the other hand, $r=\tilde{r}(\theta,-\tilde{r}(2\pi,h))$  is another solution of  \eqref{e1.17} which satisfy the same initial condition.
So we can get \eqref{e2.6} easily by uniqueness theorem for the solution.
\end{proof}

Suppose $f(h)=\sum\limits_{k=0}^{\infty}c_kh^k$ is a form series of $h$,  we denote the coefficient of $h^m$ of $f(h)$ by $\Big[f(h)\Big]_m$, namely,
$\Big[f(h)\Big]_m=c_m$. Then

\begin{lemma}\label{l2.1}
Suppose $m$ is a positive integer and greater than 1, if $\nu_k(2\pi)=0$ when $1<k<m$, namely
 \begin{equation}\label{e2.8}
    \tilde{r}(2\pi,h)=h+\nu_m(2\pi)h^m+o(h^m),
 \end{equation}
 then
  \begin{equation}\label{e2.9}
      \left[\tilde{r}^k(2\pi,h)\right]_m=\left\{\begin{array}{cl}
                                                  \nu_m(2\pi), &if\  k=1, \\
                                                  0, &if\  1<k<m ,\\
                                                  1, &if\  k=m ,\\
                                                  0, &if\  k>m.
                                                \end{array}\right.
 \end{equation}
 \end{lemma}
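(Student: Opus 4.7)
The plan is a direct multinomial expansion: by hypothesis we may write
\[
\tilde r(2\pi,h)=h+\nu_m(2\pi)\,h^{m}+\sum_{j\ge m+1}\nu_j(2\pi)\,h^{j},
\]
so it suffices to track, for each $k$, which exponents of $h$ can arise when this series is raised to the $k$-th power. The case $k=1$ is the defining hypothesis, and the case $k>m$ is immediate because $\tilde r(2\pi,h)^{k}=h^{k}+O(h^{k+1})$ starts at a degree already strictly larger than $m$.

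The only two cases requiring a short argument are $1<k<m$ and $k=m$. For $k\ge 2$, I factor out $h^{k}$ and write
\[
\tilde r(2\pi,h)^{k}=h^{k}\Bigl(1+\nu_m(2\pi)\,h^{m-1}+\sum_{j\ge m+1}\nu_j(2\pi)\,h^{j-1}\Bigr)^{k}.
\]
Expanding the inner factor by the binomial/multinomial theorem, every nontrivial term carries weight at least $m-1$ in $h$ (coming from the first correction $\nu_m(2\pi)h^{m-1}$ or something higher). Hence, apart from the leading $h^{k}$, all other monomials in $\tilde r(2\pi,h)^{k}$ have degree at least $k+m-1$.

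For $1<k<m$, the leading exponent is $k<m$ and the next possible exponent is $k+m-1\ge m+1>m$, so no contribution reaches the coefficient of $h^{m}$; hence $[\tilde r^{k}(2\pi,h)]_m=0$. For $k=m$, the leading term is precisely $h^{m}$, giving coefficient $1$, and the next term lives at exponent $2m-1>m$, so $[\tilde r^{m}(2\pi,h)]_m=1$. This completes all four cases.

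The argument has no real obstacle; the only thing to be careful about is the bookkeeping of where the ``gap'' between the leading exponent $k$ and the first correction exponent $k+m-1$ sits relative to $m$. Writing $\tilde r(2\pi,h)=h(1+\nu_m(2\pi)h^{m-1}+O(h^{m}))$ and isolating the weight $m-1$ of the smallest nontrivial correction makes this gap visible and reduces the whole lemma to the trivial inequality $k+m-1>m$ whenever $k\ge 2$.
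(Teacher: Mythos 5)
Your proof is correct: writing $\tilde r(2\pi,h)=h\bigl(1+\nu_m(2\pi)h^{m-1}+O(h^{m})\bigr)$ and observing that for $k\ge 2$ every correction to the leading term $h^{k}$ has exponent at least $k+m-1>m$ settles all four cases. The paper states Lemma~\ref{l2.1} without any proof, so your routine multinomial bookkeeping is precisely the verification the authors take for granted; there is nothing to add or compare.
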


\begin{theorem}\label{t2.1}
If $p+q$ is an even number, when $k>1$,  the first subscript satisfying $\nu_k(2\pi)\neq 0$ in $\{\nu_k(2\pi)\}$ is an odd number.
\end{theorem}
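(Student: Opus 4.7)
The hypothesis that $p+q$ is even splits into two disjoint subcases: either $p,q$ are both even, or both are odd. The ``both even'' subcase is immediate from Proposition \ref{p2.3}: that proposition asserts $\tilde r(\theta,h)$ is odd in $h$, which forces $\nu_k(2\pi) = 0$ for every even $k$, so the first nonzero coefficient with $k>1$ is automatically odd. The substance of the theorem is therefore the subcase $p,q$ both odd, which I will attack using the symmetry encoded in Proposition \ref{p2.2}.

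For $p,q$ both odd, set $P(h) := \tilde r(\pi,h)$ and $S(h) := \tilde r(2\pi,h)$. From \eqref{e1.21} one reads $\nu_1(\pi) = \nu_1(2\pi) = 1$, so both $P$ and $S$ are formal power series tangent to the identity, i.e.\ of the form $h + O(h^2)$. Specialising \eqref{e2.2} to $\theta = \pi$ gives the key identity
\[
-S(h) \;=\; P(-P(h)),
\]
which I rewrite as $S = Q\circ P$ where $Q(x) := -P(-x)$. Next I introduce the involution $\sigma$ on formal series defined by $(\sigma f)(x) := -f(-x)$; a one-line check gives $\sigma(f\circ g) = \sigma f \circ \sigma g$ and $\sigma^2 = \mathrm{id}$. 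Since $\sigma P = Q$, this produces
\[
\sigma S \;=\; \sigma(Q\circ P) \;=\; P\circ Q \;=\; P\circ S\circ P^{-1},
\]
so $\sigma S$ and $S$ are conjugate inside the group of tangent-to-identity formal power series.

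To finish, I will invoke the standard fact that conjugation by a tangent-to-identity series preserves the first non-identity coefficient of a tangent-to-identity series (a routine Taylor expansion using $P'(0)=1$). Writing $S(h) = h + s_m h^m + O(h^{m+1})$ for the smallest $m>1$ with $s_m := \nu_m(2\pi) \ne 0$, the conjugation relation therefore forces $(\sigma S)(h) = h + s_m h^m + O(h^{m+1})$. On the other hand, directly from the definition of $\sigma$,
\[
(\sigma S)(h) \;=\; -S(-h) \;=\; h + (-1)^{m+1} s_m h^m + O(h^{m+1}).
\]
Matching the $h^m$-coefficients yields $s_m = (-1)^{m+1} s_m$; since $s_m \ne 0$, this forces $m$ to be odd.

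The step that most deserves care is the conjugation-invariance lemma for tangent-to-identity series: it is routine but genuinely relies on $\nu_1(\pi) = 1$, and if the linear part of $P$ were some $c \ne 1$ one would only obtain $s_m = (-1)^{m+1} c^{1-m} s_m$, which is no longer a clean parity constraint. Once that lemma is in hand together with the symmetry $\sigma S = P\circ S\circ P^{-1}$, the remainder of the argument is essentially bookkeeping, and I do not foresee other serious obstacles.
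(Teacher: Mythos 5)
Your proof is correct, and it rests on the same two pillars as the paper's: the even--even subcase is dispatched by Proposition \ref{p2.3}, and the odd--odd subcase is reduced to a parity constraint on the leading non-identity coefficient of the return map $S(h)=\tilde r(2\pi,h)$ coming from the $\pi$-rotation symmetry. The mechanics differ in two ways worth noting. First, the paper obtains the relation $P\circ S=\sigma S\circ P$ (equivalently your $\sigma S=P\circ S\circ P^{-1}$) by combining \emph{both} Proposition \ref{p2.1} at $\theta=\pi$ and Proposition \ref{p2.2} at $\theta=2\pi$, passing through the intermediate value $\tilde r(3\pi,h)$ in \eqref{e2.10}--\eqref{e2.11}; you get the same relation from Proposition \ref{p2.2} alone at $\theta=\pi$, namely $-S=P\circ(-P)$, plus the automorphism property of $\sigma$, so your derivation is slightly more economical (it never invokes the periodicity identity \eqref{e2.1}). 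Second, where the paper extracts the $h^m$ coefficient by hand via Lemma \ref{l2.1} to reach $\bigl[(-1)^m+1\bigr]\nu_m(2\pi)=0$ in \eqref{e2.15}, you package exactly that computation as the standard fact that conjugation by a tangent-to-identity series preserves the first non-identity coefficient; your remark that this hinges on $\nu_1(\pi)=1$ is the right caveat, and it is guaranteed here by \eqref{e1.21}. The paper's version has the minor advantage that Lemma \ref{l2.1} is reused verbatim in Theorems \ref{t2.2} and \ref{t2.3}, whereas your conjugacy formulation makes the group-theoretic structure of the argument more transparent. Both are complete proofs.
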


\begin{proof}
 If $p$ and $q$ are even numbers,  $\tilde{r}(\theta,h)$ is an odd function of $h$ by Proposition\ref{p2.3}, obviously, Theorem \ref{t2.1} holds\par
Next we suppose $p$ and $q$ are odd numbers. Let $\theta=\pi$ in $\eqref{e2.1}$ and $\theta=2\pi$ in \eqref{e2.2},then
  \begin{equation}\label{e2.10}
  \begin{split}
   \tilde{ r}(3\pi,h)&=\tilde{ r}(\pi,\tilde{ r}(2\pi,h)),\\
   -\tilde{ r}(3\pi,h)&=\tilde{ r}(2\pi,-\tilde{ r}(\pi,h)).
   \end{split}
  \end{equation}
From \eqref{e2.10}, we have
 \begin{equation}\label{e2.11}
    \tilde{ r}(\pi,\tilde{ r}(2\pi,h))+\tilde{ r}(2\pi,-\tilde{ r}(\pi,h))=0.
 \end{equation}
  Namely
  \begin{equation}\label{e2.12}
   \sum_{k=1}^{\infty}\nu_k(\pi)\tilde{ r}^k(2\pi,h)+\sum_{k=1}^{\infty}(-1)^k\nu_k(2\pi)\tilde{ r}^k(\pi,h)=0.
 \end{equation}
 For any positive integer $m$, \eqref{e2.12} yields that
 \begin{equation}\label{e2.13}
    \left[\sum_{k=1}^{\infty}\nu_k(\pi)\tilde{ r}^k(2\pi,h)+\sum_{k=1}^{\infty}(-1)^k\nu_k(2\pi)\tilde{ r}^k(\pi,h)\right]_m=0.
 \end{equation}
If Lemma\ref{l2.1} holds and $\nu_1(\pi)=\nu_1(2\pi)=1$, so \eqref{e2.9} shows that
 \begin{equation}\label{e2.14}
 \begin{split}
    \left[\sum_{k=1}^{\infty}\nu_k(\pi)\tilde{ r}^k(2\pi,h)\right]_m=&\nu_m(\pi)+\nu_m(2\pi),\\
   \left[\sum_{k=1}^{\infty}(-1)^k\nu_k(2\pi)\tilde{ r}^k(\pi,h)\right]_m&=
    \Big[-\tilde{ r}(\pi,h)+(-1)^m\nu_m(2\pi)h^m\Big]_m\\&=-\nu_m(\pi)+(-1)^m\nu_m(2\pi).
    \end{split}
 \end{equation}
Furthermore, \eqref{e2.13} and \eqref{e2.14} yields that
\begin{equation}\label{e2.15}
   \Big[(-1)^m+1\Big]\nu_m(2\pi)=0.
\end{equation}
So if $m$ is an even number, \eqref{e2.15} yields that $\nu_m(2\pi)=0$, so Theorem \ref{t2.1} holds.
\end{proof}

\begin{theorem}\label{t2.2}
If $p$ is an odd number and $q$ is an even number, when $k>1$, the first subscript satisfying $\nu_k(2\pi)\neq 0$ in $\{\nu_k(2\pi)\}$ is even number.
\end{theorem}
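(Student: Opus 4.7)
The plan is to follow the proof of Theorem 2.1 almost verbatim, with Proposition 2.4 replacing Proposition 2.2.

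\emph{Step 1.} I would first specialize (2.4) at $\theta=\pi$. Since $\tilde r(0,h)=h$, this collapses to the clean half-period identity
\[
\tilde r(\pi,-\tilde r(\pi,h))=-h,
\]
which encodes a symmetry of the time-$\pi$ map $g:=\tilde r(\pi,\cdot)$, namely $g\circ(-\mathrm{id})\circ g=-\mathrm{id}$ on a neighborhood of $0$.

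\emph{Step 2.} Next I would promote this to a symmetry of the full Poincar\'{e} map. Proposition 2.1 at $\theta=\pi$ yields $\tilde r(2\pi,h)=g(g(h))$. Applying the Step 1 identity twice (first with argument $g(h)$ to get $g(-g(g(h)))=-g(h)$, then with argument $h$) produces
\[
\tilde r(2\pi,-\tilde r(2\pi,h))=-h.
\]
This is the exact analogue of (2.11) in the proof of Theorem 2.1 and is the bridge from Proposition 2.4 to an identity on $\nu_k(2\pi)$.

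\emph{Step 3.} Finally, I would extract the coefficient of $h^m$ just as in (2.13)--(2.15). Expanding the outer $\tilde r(2\pi,\cdot)$ as a power series gives
\[
\tilde r(2\pi,-\tilde r(2\pi,h))=\sum_{k=1}^{\infty}(-1)^k\nu_k(2\pi)\,\tilde r^k(2\pi,h),
\]
and under the hypothesis that $\nu_k(2\pi)=0$ for $1<k<m$, Lemma 2.1 shows that only the terms $k=1$ and $k=m$ contribute to the coefficient of $h^m$, yielding
\[
\bigl[(-1)^m-1\bigr]\nu_m(2\pi)=0.
\]
For odd $m$ this forces $\nu_m(2\pi)=0$, so the first subscript $k>1$ with $\nu_k(2\pi)\neq 0$ must be even.

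\emph{Main obstacle.} The only nonroutine step is Step 2: turning the half-period symmetry from Proposition 2.4 into a symmetry of the full Poincar\'{e} map $\tilde r(2\pi,\cdot)$. Once that identity is in place, the coefficient extraction in Step 3 is essentially formal and parallels (2.13)--(2.15) line by line.
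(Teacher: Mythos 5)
Your Step 1 and Step 3 are fine, but Step 2 contains a genuine gap. You claim that Proposition \ref{p2.1} at $\theta=\pi$ gives $\tilde r(2\pi,h)=g(g(h))$ with $g=\tilde r(\pi,\cdot)$; it does not. Equation \eqref{e2.1} at $\theta=\pi$ reads $\tilde r(3\pi,h)=\tilde r(\pi,\tilde r(2\pi,h))$. The factorization $\tilde r(2\pi,\cdot)=g\circ g$ would require the right-hand side of \eqref{e1.17} to be $\pi$-periodic in $\theta$, which it is not: the coefficients $R_k(\theta),Q_k(\theta)$ contain odd powers of $\cos\theta$ and $\sin\theta$ (see e.g.\ $R_1,Q_1$ in \eqref{e3.5}), so the flow from $\theta=\pi$ to $\theta=2\pi$ is in general a different map from the flow from $0$ to $\pi$. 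Consequently the exact identity $\tilde r(2\pi,-\tilde r(2\pi,h))=-h$ that you build on is not available in this parity case; it is the identity that drives Theorem \ref{t2.3}, where the relevant reflection is $\theta\to 2\pi-\theta$ rather than $\theta\to\pi-\theta$. From \eqref{e2.4} alone one only gets $g(-g(h))=-h$ together with $\tilde r(2\pi,-\tilde r(\pi,h))=-\tilde r(-\pi,h)$, and converting these into a statement about $\nu_m(2\pi)$ unavoidably brings the auxiliary quantities $\nu_m(\pi)$ and $\nu_m(-\pi)$ into play.

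That is exactly how the paper closes the gap: it evaluates \eqref{e2.1} at $\theta=-\pi$ to get $\tilde r(\pi,h)=\tilde r(-\pi,\tilde r(2\pi,h))$, whose $m$-th coefficient (under the hypothesis $\nu_k(2\pi)=0$ for $1<k<m$, via Lemma \ref{l2.1}) is $\nu_m(\pi)-\nu_m(-\pi)-\nu_m(2\pi)=0$; it evaluates \eqref{e2.4} at $\theta=2\pi$ to get $\tilde r(-\pi,h)+\tilde r(2\pi,-\tilde r(\pi,h))=0$, whose $m$-th coefficient is $\nu_m(-\pi)-\nu_m(\pi)+(-1)^m\nu_m(2\pi)=0$; adding the two eliminates $\nu_m(\pi)-\nu_m(-\pi)$ and yields $\bigl[(-1)^m-1\bigr]\nu_m(2\pi)=0$, which is your target equation. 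So your Step 3 is the right endgame, but you must replace the unfounded factorization $\tilde r(2\pi,\cdot)=g\circ g$ by this two-relation elimination (or by an argument that the time-$2\pi$ and time-$\pi$ maps commute to order $h^m$ under the inductive hypothesis, which is what makes your claimed identity hold modulo $o(h^m)$ even though it fails exactly).
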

\begin{proof}
Let $\theta=-\pi$ in \eqref{e2.1}, we have
\begin{equation}\label{e2.16}
    \tilde{r}(\pi,h)=\tilde{r}(-\pi,\tilde{r}(2\pi,h)).
\end{equation}
For any positive constant $m$,
\begin{equation}\label{e2.17}
   \left[ \tilde{r}(\pi,h)-\sum_{k=1}^{\infty}\nu_k(-\pi)\tilde{r}^k(2\pi,h)\right]_m=0.
\end{equation}
If Lemma\ref{l2.1} holds, \eqref{e2.9} and \eqref{e2.17} yield that
\begin{equation}\label{e2.18}
    \nu_m(\pi)-\nu_m(-\pi)-\nu_m(2\pi)=0.
\end{equation}
  Furthermore, let $\theta=2\pi$ in \eqref{e2.4}, we have
  \begin{equation}\label{e2.19}
    \tilde{r}(-\pi,h)+\tilde{r}(2\pi,-\tilde{r}(\pi,h))=0.
  \end{equation}
For any positive constant $m$,
\begin{equation}\label{e2.20}
     \left[\tilde{r}(-\pi,h)+\sum_{k=1}^{\infty}(-1)^k\nu_k(2\pi) \tilde{r}^k(\pi,h)\right]_m=0.
\end{equation}
When Lemma\ref{l2.1} holds, from \eqref{e2.20}, we get
\begin{equation}\label{e2.21}
   \Big[ \tilde{r}(-\pi,h)-\tilde{r}(\pi,h)+(-1)^m\nu_m(2\pi)\tilde{r}^m(\pi,h)\Big]_m=0.
\end{equation}
Namely
\begin{equation}\label{e2.22}
    \nu_m(-\pi)-\nu_m(\pi)+(-1)^m\nu_m(2\pi)=0.
\end{equation}
\eqref{e2.18} and \eqref{e2.22} show that
\begin{equation}\label{e2.23}
    \Big[(-1)^m-1\Big]\nu_m(2\pi)=0.
\end{equation}
If $m$ is an odd number, \eqref{e2.23} yields that $\nu_m(2\pi)=0$, so Theorem \ref{t2.2} holds.
\end{proof}

\begin{theorem}\label{t2.3}
If $p$ is an even number and $q$ is an odd number, when $k>1$, the first subscript satisfying $\nu_k(2\pi)\neq 0$ in $\{\nu_k(2\pi)\}$ is even number.
\end{theorem}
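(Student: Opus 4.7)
The plan is to imitate the proof of Theorem~\ref{t2.2} almost line by line, since Proposition~\ref{p2.5} supplies the same-shaped symmetry identity $-\tilde r(\pi-\theta,h)=\tilde r(\theta,-\tilde r(\pi,h))$ as Proposition~\ref{p2.4}, only with the parities of $p$ and $q$ swapped. Both ingredients of the argument used in Theorem~\ref{t2.2}, namely Proposition~\ref{p2.1} and this symmetry, are therefore still available in the present situation, so the whole scaffolding of (\ref{e2.16})--(\ref{e2.23}) should carry over.

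First I would set $\theta=-\pi$ in Proposition~\ref{p2.1} to obtain $\tilde r(\pi,h)=\tilde r(-\pi,\tilde r(2\pi,h))$, and extract the coefficient of $h^m$ by Lemma~\ref{l2.1}, working under the inductive hypothesis that $m$ is the smallest index $>1$ for which $\nu_m(2\pi)\neq 0$. The key technical input here is the explicit formula (\ref{e1.21}) for $\nu_1(\theta)$, which yields $\nu_1(\pm\pi)=1$ and thereby puts the power series $\tilde r(\pm\pi,h)$ in the normalized shape required by Lemma~\ref{l2.1}. The resulting identity is $\nu_m(\pi)-\nu_m(-\pi)-\nu_m(2\pi)=0$, just as in (\ref{e2.18}).

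Next I would set $\theta=2\pi$ in Proposition~\ref{p2.5}, which produces $\tilde r(-\pi,h)+\tilde r(2\pi,-\tilde r(\pi,h))=0$, and extract the $h^m$-coefficient in the same manner. Following the computation (\ref{e2.20})--(\ref{e2.22}) of Theorem~\ref{t2.2} verbatim, this yields $\nu_m(-\pi)-\nu_m(\pi)+(-1)^m\nu_m(2\pi)=0$. Adding the two resulting relations eliminates both $\nu_m(\pi)$ and $\nu_m(-\pi)$ and leaves $[(-1)^m-1]\nu_m(2\pi)=0$. When $m$ is odd, the bracket is $-2\ne 0$, forcing $\nu_m(2\pi)=0$ and contradicting the choice of $m$; hence the first index $k>1$ with $\nu_k(2\pi)\ne 0$ must be even.

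The main obstacle I anticipate is interpretive rather than computational. Proposition~\ref{p2.5} is proved by invoking the substitution $\theta\to 2\pi-\theta$ (the reflection that is available when $p$ is even and $q$ is odd), so one must verify that the identity actually delivered by this substitution is the one recorded in (\ref{e2.6}) rather than a half-period-shifted variant. If on checking it turns out that the correct form is instead $-\tilde r(2\pi-\theta,h)=\tilde r(\theta,-\tilde r(2\pi,h))$, the conclusion still follows even more directly: specializing $\theta=2\pi$ gives the involutive identity $\tilde r(2\pi,-\tilde r(2\pi,h))=-h$, whose $h^m$-coefficient, read off using Lemma~\ref{l2.1}, again produces $[(-1)^m-1]\nu_m(2\pi)=0$ and hence the theorem.
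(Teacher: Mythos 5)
Your instinct about the ``interpretive obstacle'' is exactly right, and your fallback branch is in fact the paper's own proof. The identity \eqref{e2.6} as printed is a misprint: when $p$ is even and $q$ is odd the substitution $(r,\theta)\mapsto(-r,\pi-\theta)$ sends $(x,y)=(r^p\cos\theta,r^q\sin\theta)$ to $(-x,y)$ and is therefore not a symmetry of the system, so the half-period form $-\tilde r(\pi-\theta,h)=\tilde r(\theta,-\tilde r(\pi,h))$ does not hold in general in this parity case; what the proof of Proposition~\ref{p2.5} actually establishes is $-\tilde r(2\pi-\theta,h)=\tilde r(\theta,-\tilde r(2\pi,h))$. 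Consequently your primary route---combining the $\theta=-\pi$ specialization of Proposition~\ref{p2.1} with a $\theta=2\pi$ specialization of the literal \eqref{e2.6} so as to reproduce the pair of relations \eqref{e2.18} and \eqref{e2.22}---would not go through as written. But the paper's proof of Theorem~\ref{t2.3} is precisely your alternative: setting $\theta=2\pi$ in the corrected identity gives the involution relation $h+\tilde r(2\pi,-\tilde r(2\pi,h))=0$ (this is \eqref{e2.24}), and extracting the $h^m$-coefficient via Lemma~\ref{l2.1} under the hypothesis $\nu_k(2\pi)=0$ for $1<k<m$ yields $\bigl[(-1)^m-1\bigr]\nu_m(2\pi)=0$, which forces $\nu_m(2\pi)=0$ for odd $m$. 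Note that this single identity already suffices, so Proposition~\ref{p2.1} is not needed here at all. In short: your proposal is correct, and via the fallback it coincides with the paper's argument.
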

\begin{proof}
If $p$ is an even number and $q$ is an odd number, let $\theta=2\pi$ in \eqref{e2.6}, we have
\begin{equation}\label{e2.24}
    h+\tilde{r}(2\pi,-\tilde{r}(2\pi,h)=0.
\end{equation}
For any positive integer $m$,
\begin{equation}\label{e2.25}
   \left[ h+\sum_{k=1}^{\infty}(-1)^k\nu_k(2\pi)\tilde{r}^k(2\pi,h)\right]_m=0.
\end{equation}
 Suppose $\nu_k(2\pi)=0$ when $1<k<m$,then \eqref{e2.25} yields that
 \begin{equation}\label{e2.26}
   \Big[ -\tilde{r}(2\pi,h)+(-1)^m\nu_m(2\pi)\tilde{r}^m(2\pi,h)\Big]_m=0.
 \end{equation}
 We can get \eqref{e2.23} easily by \eqref{e2.26}, so Theorem\ref{t2.3} holds.
\end{proof}
Based on Theorem\ref{t2.1}, Theorem\ref{t2.2} and Theorem\ref{t2.3}, we have
\begin{definition}\label{d2.1}
If $p+q$ is even number, $\nu_{2m+1}(2\pi)$ is called to be the $m-th$ focal values of the origin of system \eqref{e1.10};\
If $p+q$ is odd number,$\nu_{2m}(2\pi)$ is called to be the $m-th$ focal values of the origin of system \eqref{e1.10}$m=1,2,\cdots$.
\end{definition}

\begin{definition}\label{d2.2}
If $p+q$ is even number, the first nonzero in $\{\nu_k(2\pi)\}$ is $\nu_{2m+1}(2\pi)$, then the origin is called the $m-$order weak (fine) focus of
\eqref{e1.10}; If $p+q$ is odd number, the first nonzero in $\{\nu_k(2\pi)\}$ is $\nu_{2m}(2\pi)$, hen the origin is called the $m-$order weak (fine) focus of
\eqref{e1.10}.
\end{definition}

The definition of algebraic equivalence was given in \cite{liu-2001}(see also \cite{liu-2014}).

\begin{definition}\label{d2.3}
 Suppose that $\{\lambda_m\}$ and $\{\tilde{\lambda}_m\}$ are polynomials of $(a_{kj})'s,\ (b_{kj})'s$ which are coefficient of functions of right hand of system \eqref{e1.8}, $\zeta_1^{(m)}$,\ $\zeta_2^{(m)}$,\ $\cdots$,\ $\zeta_{m-1}^{(m)}$ are polynomials of $(a_{kj})'s,\ (b_{kj})'s$.  If there exists a positive integer $m$  satisfy that
 \begin{equation}\label{e2.27}
    \lambda_m=\tilde{\lambda}_m+\left(\zeta_1^{(m)}\lambda_1+\zeta_2^{(m)}\lambda_2+\cdots+\zeta_{m-1}^{(m)}\lambda_{m-1}\right),
 \end{equation}
then $\lambda_m$ and $\tilde{\lambda}_m$ are algebraic equivalence, denote by $\lambda_m\sim\tilde{\lambda}_m$.\par
 If there exists a positive integer $m$ satisfy that $\lambda_m\sim\tilde{\lambda}_m$, then sequence of functions $\{\lambda_m\}$ and $\{\tilde{\lambda}_m\}$is called to be algebraic equivalence, denote by $\{\lambda_m\}\sim\{\tilde{\lambda}_m\}$.
\end{definition}
Definition\ref{d2.3} yields thatㄩ\par
1)\ Algebraic equivalence relation of sequence of functions is reflexive, symmetric, and transitive˙\par
2)\ If there exists a positive integer $m$ satisfy that$\lambda_m\sim\tilde{\lambda}_m$, when $\lambda_1=\lambda_2=\cdots=\lambda_{m-1}=0$,  $\lambda_m=\tilde{\lambda}_m$;\par
3)\ $\lambda_1\sim\tilde{\lambda}_1$ means that$\lambda_1=\tilde{\lambda}_1$.\par
From Theorem\ref{t2.1}, Theorem\ref{t2.2}, Theorem\ref{t2.3}, we can conclude that
\begin{theorem}\label{t2.4}
For system \eqref{e1.8}, if $p+q$ is an even number, then $\nu_{2m}(2\pi)\sim0$;
if $p+q$ is an odd number, then $\nu_{2m+1}(2\pi)\sim0$.\ $m=1,2,\cdots.$
\end{theorem}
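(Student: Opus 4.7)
The plan is to upgrade each of Theorems 2.1, 2.2, 2.3 from the statement ``if $\nu_k(2\pi)=0$ for $1<k<m$ then $\nu_m(2\pi)=0$'' to an explicit algebraic identity that expresses $\nu_m(2\pi)$ as a polynomial combination of earlier focal values $\nu_j(2\pi)$, $1<j<m$. This is exactly the content of $\nu_m(2\pi)\sim 0$ under Definition \ref{d2.3} (taking $\tilde\lambda_m=0$). The key observation is that the identities (2.12), (2.17), (2.20), (2.25) are formal power series identities in $h$ with \emph{no} side assumptions; Lemma \ref{l2.1} was only used to discard lower-order clutter. Keeping that clutter, and solving instead for $\nu_m(2\pi)$, yields the algebraic equivalence directly.

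I would start with the case $p+q$ even, both $p,q$ odd. From (2.12), extract the coefficient of $h^m$ \emph{without} invoking Lemma \ref{l2.1}. Using $\nu_1(\pi)=\nu_1(2\pi)=1$ and the multinomial expansion of $\tilde r^k$, the $k=1$ and $k=m$ terms of the two sums together contribute $[1+(-1)^m]\,\nu_m(2\pi)$ to the $\nu_m(2\pi)$-linear part, while the cross-terms $\nu_m(\pi)$ and $-\nu_m(\pi)$ cancel as in the original proof. The remainder is
\[
R=\sum_{k=2}^{m-1}\nu_k(\pi)\bigl[\tilde r^k(2\pi,h)\bigr]_m+\sum_{k=2}^{m-1}(-1)^k\nu_k(2\pi)\bigl[\tilde r^k(\pi,h)\bigr]_m.
\]
Every monomial in $R$ contains at least one factor $\nu_l(2\pi)$ with $2\le l\le m-1$: in the first sum, $k<m$ forces some exponent in the partition of $m$ to be $\ge 2$, producing such a factor; in the second, $\nu_k(2\pi)$ appears explicitly. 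Hence for $m=2\mu$ we can solve
\[
\nu_{2\mu}(2\pi)=-\tfrac12 R=\sum_{j=2}^{2\mu-1}\zeta_j^{(2\mu)}\,\nu_j(2\pi),
\]
with coefficients $\zeta_j^{(2\mu)}$ that are polynomials in the $\nu_k(\pi)$'s and hence in the $(a_{kj})$'s and $(b_{kj})$'s. This is precisely $\nu_{2\mu}(2\pi)\sim 0$.

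The remaining cases are parallel. For $p$ odd, $q$ even, apply the same extraction to the two identities (2.17) and (2.20) and add, obtaining $[(-1)^m-1]\nu_m(2\pi)+(\text{polynomial in lower }\nu_j(2\pi))=0$, solvable for odd $m$. For $p$ even, $q$ odd, identity (2.25) alone produces $[(-1)^m-1]\nu_m(2\pi)+(\text{polynomial in lower }\nu_j(2\pi))=0$, again solvable for odd $m$. Finally, when both $p$ and $q$ are even, Proposition \ref{p2.3} asserts that $\tilde r(\theta,h)$ is odd in $h$, so $\nu_{2m}(2\pi)\equiv 0$, which is strictly stronger than algebraic equivalence. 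Combining the four cases gives the theorem. The main obstacle is the bookkeeping that guarantees every monomial in the remainder polynomial $R$ (and its analogues) carries at least one factor $\nu_l(2\pi)$ with $l\ge 2$; once the structural observation about partitions of $m$ into $k<m$ positive parts is made, the rest is mechanical.
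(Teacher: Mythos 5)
Your proposal is correct, and it is in fact more complete than what the paper writes down. The paper's ``proof'' of Theorem \ref{t2.4} is a single sentence deferring to Theorems \ref{t2.1}--\ref{t2.3}; but those theorems, as stated, only give \emph{conditional} vanishing (``if $\nu_k(2\pi)=0$ for $1<k<m$ then $\nu_m(2\pi)=0$''), which by itself does not deliver the algebraic equivalence $\nu_m(2\pi)\sim 0$ required by Definition \ref{d2.3} --- one needs the explicit identity $\nu_m(2\pi)=\sum_{j=2}^{m-1}\zeta_j^{(m)}\nu_j(2\pi)$. You supply exactly the missing step: you return to the same symmetry identities \eqref{e2.12}, \eqref{e2.17}, \eqref{e2.20}, \eqref{e2.25} that the paper uses, but instead of invoking Lemma \ref{l2.1} to kill the lower-order terms, you keep the remainder $R$ and verify structurally that every monomial of $R$ carries a factor $\nu_l(2\pi)$ with $2\le l\le m-1$ (via the partition argument: a contribution to $[\tilde r^k]_m$ with $k<m$ forces some exponent $\ge 2$, and since $k\ge 2$ each such exponent is $\le m-1$, so $\nu_m(2\pi)$ itself never occurs in $R$ and the equation $[1\pm(-1)^m]\nu_m(2\pi)+R=0$ can be solved cleanly). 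The coefficients $\zeta_j^{(m)}$ you obtain involve the various $\nu_k(\pi)$, $\nu_k(-\pi)$, $\nu_k(2\pi)$, all of which are polynomials in the $(a_{kj})$'s and $(b_{kj})$'s, as Definition \ref{d2.3} demands. So the route is the same family of identities as the paper's, but your version actually proves the stated theorem rather than the weaker conditional statement; the four-case split ($p,q$ both odd; $p$ odd, $q$ even; $p$ even, $q$ odd; both even via Proposition \ref{p2.3}) matches the paper's organization. No gaps.
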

Suppose that
\begin{equation}\label{e2.28}
    g(h)=h+\sum_{k=2}^{\infty}c_kh^k
\end{equation}
is a power series of $h$ with nonzero convergence radius. For sufficiently small $h$, it is more convenient to solve equation \eqref{e1.17} with initial conditions
\begin{equation}\label{e2.29}
    r|_{\theta=0}=g(h)
\end{equation}
than to solve equation $g(h)=h$ sometimes, (see the progress in \S3 for computing focal values of the origin of system \eqref{e3.1}).
The solution of\eqref{e1.17} can be written as a power series of $h$ with nonzero radius convergence when $|\theta|<4\pi$
\begin{equation}\label{e2.30}
    r=r^*(\theta,h)=\sum_{k=1}^{\infty}\nu^*_k(\theta)h^k,
\end{equation}
where
\begin{equation}\label{e2.31}
    \nu^*_1(0)=1,\ \ \nu^*_k(0)=c_k,\ k=2,3,\cdots.
\end{equation}
Furthermore,
\begin{equation}\label{e2.32}
    r=\tilde{r}(\theta,g(h))=\sum_{k=1}^{\infty}\nu_k(\theta)g(h)^k
\end{equation}
is another solution of \eqref{e1.17} with initial condition\eqref{e2.29}, so $r^*(\theta,h)=\tilde{r}(\theta,g(h))$ by uniqueness theory of solution. Namely
\begin{equation}\label{e2.33}
    \sum_{k=1}^{\infty}\nu^*_k(\theta)h^k=\sum_{k=1}^{\infty}\nu_k(\theta)g(h)^k
\end{equation}
\eqref{e2.33} shows that
\begin{theorem}\label{t2.5}
\begin{equation}\label{e2.34}
    \nu^*_k(2\pi)-\nu^*_k(0)\sim\nu_k(2\pi),\ k=2,3,\cdots.
\end{equation}
\end{theorem}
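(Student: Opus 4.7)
The plan is to read off the coefficient of $h^m$ from \eqref{e2.33} specialised at $\theta = 2\pi$, and then recast the resulting identity into the form demanded by Definition \ref{d2.3}. Two preliminary observations make the book-keeping easier: (i) $\nu_1^*(\theta)$ and $\nu_1(\theta)$ both solve the linear equation $\frac{d\nu}{d\theta} = \frac{R_0(\theta)}{Q_0(\theta)}\,\nu$ with $\nu(0) = 1$, hence $\nu_1^*(\theta) \equiv \nu_1(\theta)$, and by \eqref{e1.21} we have $\nu_1(2\pi) = \nu_1^*(2\pi) = 1$; (ii) by \eqref{e2.31}, $c_1 = 1$ and $c_k = \nu_k^*(0)$ for $k \geq 2$, so the constants defining $g(h)$ coincide with the initial values $\nu_k^*(0)$.

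Matching coefficients of $h^m$ in \eqref{e2.33} at $\theta = 2\pi$ is then routine. Writing $g(h)^k = h^k + O(h^{k+1})$, the quantity $[g(h)^k]_m$ vanishes for $k > m$, equals $1$ for $k = m$, and for $2 \leq k < m$ is an explicit polynomial $P_{k,m}$ in $c_2, \dots, c_{m-k+1}$. The $k = 1$ contribution on the right of \eqref{e2.33} is $\nu_1(2\pi)\,[g(h)]_m = c_m = \nu_m^*(0)$. Collecting the terms yields the master identity
\begin{equation*}
\nu_m^*(2\pi) - \nu_m^*(0) \;=\; \nu_m(2\pi) \;+\; \sum_{k=2}^{m-1} P_{k,m}\,\nu_k(2\pi).
\end{equation*}

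The last step is to convert the right-hand side so that the multipliers are the $\lambda_k := \nu_k^*(2\pi) - \nu_k^*(0)$ rather than the $\tilde\lambda_k := \nu_k(2\pi)$. I would do this by induction on $m$: the master identity at each level $k < m$ reads $\nu_k(2\pi) = (\nu_k^*(2\pi) - \nu_k^*(0)) - \sum_{j=2}^{k-1} P_{j,k}\,\nu_j(2\pi)$, and recursively substituting expresses each $\nu_k(2\pi)$ with $k < m$ as a polynomial combination of $\lambda_2, \dots, \lambda_{m-1}$. The outcome is an identity of the shape \eqref{e2.27}, which is precisely the algebraic equivalence claimed.

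The only genuine obstacle is the book-keeping in this recursive substitution and the verification that the resulting multipliers $\zeta_k^{(m)}$ remain polynomial. This is clean, however, because every substitution trades a $\nu_k(2\pi)$ for a polynomial expression involving only strictly lower indices, so the recursion terminates in at most $m-2$ steps and produces manifestly polynomial $\zeta_k^{(m)}$'s. Everything else in the proof is a direct coefficient comparison from \eqref{e2.33}.
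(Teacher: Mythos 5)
Your proposal is correct and follows exactly the route the paper intends: the paper's entire proof is the remark that \eqref{e2.33} ``shows'' the theorem, and you have simply carried out the coefficient comparison of $h^m$ at $\theta=2\pi$ (using $\nu_1(2\pi)=\nu_1^*(2\pi)=1$ and $[g(h)^k]_m$) that the paper leaves implicit. The only added content is your recursive substitution turning the multipliers $\nu_k(2\pi)$ into polynomial combinations of $\nu_k^*(2\pi)-\nu_k^*(0)$ so as to match Definition \ref{d2.3} literally, which is a legitimate and harmless elaboration.
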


Next, we will consider the perturbed system of system \eqref{e1.10}
\begin{equation}\label{e2.35}
    \begin{split}
    &\frac{dx}{dt}=-py^{2p-1}+\sum_{m=2pq-q+1}^{\infty}\mathcal{X}_m(x,y,\bm\varepsilon)=\mathcal{X}(x,y,\bm\varepsilon),\\
    &\frac{dy}{dt}=qx^{2q-1}+\sum_{m=2pq-p+1}^{\infty}\mathcal{Y}_m(x,y,\bm\varepsilon)=\mathcal{Y}(x,y,\bm\varepsilon),
    \end{split}
\end{equation}
where
 \begin{equation}\label{e2.36}
    {\bm\varepsilon}=(\varepsilon_1,\varepsilon_2,\cdots,\varepsilon_n)
 \end{equation}
 is a small parameter and
     \begin{equation}\label{e2.37}
   \mathcal{X}_m(x,y,\bm\varepsilon)=\sum_{kp+jq=m}a_{kj}(\bm\varepsilon)x^ky^j,\ \   \mathcal{Y}_m(x,y,\bm\varepsilon)=\sum_{kp+jq=m}b_{kj}(\bm\varepsilon)x^ky^j
 \end{equation}
 are $m-$order homogeneous weight polynomial of $x,y$  with  weight $p,q$, $\mathcal{X}(x,y,\bm\varepsilon),\ \mathcal{Y}(x,y,\bm\varepsilon)$ are power series of $x, y,\bm\varepsilon$ with nonzero convergence radius. Denote the solution of system \eqref{e2.35} with initial condition $r|_{\theta=0}=h$ in generalized polar coordinate \eqref{e1.7} by
 \begin{equation}\label{e2.38}
    r=\tilde{r}(\theta,h,\bm\varepsilon)=\sum_{k=1}^{\infty}\nu_k(\theta,\bm\varepsilon)h^k.
\end{equation}
We have
\begin{theorem}\label{t2.6}
Suppose $p+q$ is an even number, if the origin of system \eqref{e2.35}is a $m-$th weak focus when $\bm\varepsilon=\mathbf{0}$,
and choosing a proper parameter $\bm\varepsilon$ in $|\bm\varepsilon|\ll1$, we have
\begin{equation}\label{e2.39}
    \begin{split}
    &0<|\nu_3(2\pi,\bm{\varepsilon})|\ll|\nu_5(2\pi,\bm{\varepsilon})|\ll\cdots|\nu_{2m+1}(2\pi,\bm{\varepsilon})|,\\
    &\nu_{2k-1}(2\pi,\bm{\varepsilon})\nu_{2k+1}(2\pi,\bm{\varepsilon})<0,\ k=2,3,\cdots,m,
    \end{split}
\end{equation}
then there exist $m-1$ limit cycles in the neighborhood of the origin of system \eqref{e2.35}.
\end{theorem}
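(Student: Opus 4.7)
The plan is to count the small positive zeros of the Poincar\'e displacement function
$$
\Delta(h,\bm\varepsilon)=\tilde{r}(2\pi,h,\bm\varepsilon)-h=\sum_{k\ge 2}\nu_{k}(2\pi,\bm\varepsilon)\,h^{k},
$$
since each simple positive zero of $\Delta(\cdot,\bm\varepsilon)$ corresponds to a limit cycle of \eqref{e2.35} in a neighbourhood of the origin. The argument proceeds in two steps: first reduce $\Delta$ to a polynomial in $u=h^{2}$ whose coefficients are the odd-indexed focal values, and then run a Bautin-style sign-change / intermediate-value argument.

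\textbf{Step 1.} Since $p+q$ is even, Theorem \ref{t2.4} gives $\nu_{2k}(2\pi,\bm\varepsilon)\sim 0$ for every $k\ge 1$. Iterating the algebraic equivalence of Definition \ref{d2.3}, each even-indexed focal value can be written as a polynomial without constant term in the odd-indexed ones $\nu_{3}(2\pi,\bm\varepsilon),\nu_{5}(2\pi,\bm\varepsilon),\ldots,\nu_{2k-1}(2\pi,\bm\varepsilon)$. Under the rapid-decay hypothesis \eqref{e2.39} this polynomial is controlled by its linear part, so $|\nu_{2k}(2\pi,\bm\varepsilon)|=O\bigl(|\nu_{2k-1}(2\pi,\bm\varepsilon)|\bigr)=o\bigl(|\nu_{2k+1}(2\pi,\bm\varepsilon)|\bigr)$ for $2\le k\le m$. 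Factoring $h^{3}$ we then have
$$
\Delta(h,\bm\varepsilon)=h^{3}\Bigl[\nu_{3}+\nu_{5}u+\nu_{7}u^{2}+\cdots+\nu_{2m+1}u^{m-1}+\mathcal{R}(h,\bm\varepsilon)\Bigr],\qquad u=h^{2},
$$
where $\mathcal{R}$ absorbs the even-indexed contributions (which enter with half-integer powers of $u$) and the tail of order $h^{2m+2}$.

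\textbf{Step 2.} Write $P(u):=\nu_{3}+\nu_{5}u+\cdots+\nu_{2m+1}u^{m-1}$ and introduce the scales
$$
u_{j}^{*}:=\Bigl|\tfrac{\nu_{2j+1}(2\pi,\bm\varepsilon)}{\nu_{2j+3}(2\pi,\bm\varepsilon)}\Bigr|,\qquad j=1,2,\ldots,m-1.
$$
By \eqref{e2.39} each $u_{j}^{*}$ is arbitrarily small, and sharpening the "$\ll$" sufficiently one arranges $u_{1}^{*}\ll u_{2}^{*}\ll\cdots\ll u_{m-1}^{*}$. On a neighbourhood of $u_{j}^{*}$ the polynomial $P$ is dominated by the two adjacent monomials $\nu_{2j+1}u^{j-1}$ and $\nu_{2j+3}u^{j}$, which cancel precisely at $u=u_{j}^{*}$ because $\nu_{2j+1}\nu_{2j+3}<0$. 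Combined with the size bound on $\mathcal{R}$ from Step 1, an application of the intermediate value theorem produces a simple positive zero $u_{j}^{\sharp}$ of $P+\mathcal{R}$ in each such neighbourhood. This yields $m-1$ distinct small positive zeros $h_{j}^{\sharp}=\sqrt{u_{j}^{\sharp}}$ of $\Delta(\cdot,\bm\varepsilon)$, hence $m-1$ limit cycles of \eqref{e2.35} bifurcating from the origin.

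The main technical obstacle lies in Step 1, namely verifying that $\mathcal{R}$ really is negligible compared with the two balancing odd-indexed monomials at every working scale $u_{j}^{*}$. Because the even-indexed terms come in with fractional powers of $u$, a careful asymptotic bookkeeping is needed to show that the "$\ll$" ordering in \eqref{e2.39} can be strengthened so as to suppress $\mathcal{R}$ uniformly across all $m-1$ scales; once this is in place, the dominant-balance analysis in Step 2 is completely standard.
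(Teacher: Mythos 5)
The paper states Theorem \ref{t2.6} without giving any proof (as it does for Theorems \ref{t2.7}--\ref{t2.9}), treating it as the classical weak-focus bifurcation result, so there is no in-paper argument to compare against; your proposal supplies exactly the standard Bautin-type argument that the authors are implicitly invoking, and it is essentially sound. Your Step 1 is handled correctly: from Theorem \ref{t2.4} and Definition \ref{d2.3}, $\nu_{2k}(2\pi,\bm\varepsilon)$ is a polynomial combination without constant term of the preceding focal values, and after iterating the substitution each even term satisfies $|\nu_{2k}h^{2k}|\le C|\nu_{2k-1}(2\pi,\bm\varepsilon)|\,h^{2k}=O(h)\cdot|\nu_{2k-1}h^{2k-1}|$, so it is uniformly dominated by the preceding odd term; likewise the tail beyond $h^{2m+1}$ is $O(h^{2m+2})$ with coefficients bounded uniformly in $\bm\varepsilon$ and is absorbed since $\nu_{2m+1}(2\pi,\mathbf{0})\neq 0$. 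The only soft spot is your Step 2: localizing each zero at the two-term balance point $u_j^{*}=|\nu_{2j+1}/\nu_{2j+3}|$ requires the non-adjacent monomials of $P$ to be negligible there, and the ratio of, say, the $(j+2)$-th monomial to the balancing pair at $u_j^{*}$ is $|\nu_{2j+5}\nu_{2j+1}|/\nu_{2j+3}^{2}$, which is \emph{not} automatically small under a generic reading of \eqref{e2.39} (take $|\nu_{2j+1}|=\epsilon^{3}$, $|\nu_{2j+3}|=\epsilon^{2}$, $|\nu_{2j+5}|=\epsilon$). You flag this and it is repairable by strengthening the $\ll$'s (which the theorem's ``choosing a proper parameter'' permits), but the cleaner and more customary route avoids the issue entirely: choose a decreasing sequence $h_m>h_{m-1}>\cdots>h_1>0$ with each $h_j$ so small that $\nu_{2j+1}h_j^{2j+1}$ dominates all other terms of $\Delta(h_j,\bm\varepsilon)$, so that $\operatorname{sign}\Delta(h_j,\bm\varepsilon)=\operatorname{sign}\nu_{2j+1}(2\pi,\bm\varepsilon)$; the alternation in \eqref{e2.39} then gives $m-1$ sign changes and the intermediate value theorem yields $m-1$ isolated positive zeros, hence $m-1$ limit cycles, without any delicate cancellation analysis.
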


\begin{theorem}\label{t2.7}
Suppose $p+q$ is an odd number, if the origin of system \eqref{e2.35} is a $m-$th weak focus when $\bm\varepsilon=\mathbf{0}$,
and choosing a proper parameter $\bm\varepsilon$ in $|\bm\varepsilon|\ll1$, we have
\begin{equation}\label{e2.40}
    \begin{split}
    &0<|\nu_2(2\pi,\bm{\varepsilon})|\ll|\nu_4(2\pi,\bm{\varepsilon})|\ll\cdots|\nu_{2m}(2\pi,\bm{\varepsilon})|,\\
    &\nu_{2k}(2\pi,\bm{\varepsilon})\nu_{2k+2}(2\pi,\bm{\varepsilon})<0,\ k=1,2,\cdots,m,
    \end{split}
\end{equation}
then there exist $m-1$ limit cycles in the neighborhood of the origin of system \eqref{e2.35}.
\end{theorem}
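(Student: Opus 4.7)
The plan is to translate the existence of limit cycles of \eqref{e2.35} in a small punctured neighborhood of the origin into positive simple zeros of the Poincar\'{e} successive function $\triangle(h,\bm\varepsilon) = \tilde r(2\pi,h,\bm\varepsilon)-h = \sum_{k\ge2} \nu_k(2\pi,\bm\varepsilon)\,h^k$. My first step is to factor out $h^2$ and apply Theorem \ref{t2.4} to the perturbed family: since $p+q$ is odd and the algebraic-equivalence identities from the proofs of Theorems \ref{t2.2} and \ref{t2.3} depend polynomially on the coefficients $a_{kj}(\bm\varepsilon), b_{kj}(\bm\varepsilon)$, one obtains relations
\[
\nu_{2k+1}(2\pi,\bm\varepsilon) \;=\; \sum_{j=2}^{2k} \zeta_j^{(2k+1)}(\bm\varepsilon)\,\nu_j(2\pi,\bm\varepsilon),
\]
so each odd-indexed coefficient is controlled by a constant multiple of the largest even-indexed coefficient with strictly smaller index. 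Consequently
\[
\frac{\triangle(h,\bm\varepsilon)}{h^2} \;=\; \sum_{k=1}^{m} \nu_{2k}(2\pi,\bm\varepsilon)\,h^{2k-2} \;+\; E(h,\bm\varepsilon),
\]
where the error $E$ gathers the odd-indexed contributions (each carrying one extra factor of $h$ relative to the even-indexed term that bounds it) and the tail from $k\ge m+1$.

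Under \eqref{e2.40}, I would next pick $0 < h_1 < h_2 < \cdots < h_{m-1}$ with $h_k^{2}$ of the order of the geometric mean of consecutive balance radii $|\nu_{2k}(2\pi,\bm\varepsilon)/\nu_{2k+2}(2\pi,\bm\varepsilon)|$. The rapid-decrease condition $|\nu_{2k}|\ll|\nu_{2k+2}|$ forces $h_k\to 0^+$ as $|\bm\varepsilon|\to 0$, while that same condition iterated ensures $h_k\ll h_{k+1}$. Inside each resulting subinterval exactly one even-indexed monomial $\nu_{2j}(2\pi,\bm\varepsilon)\,h^{2j}$ dominates the remaining even-indexed terms, and the alternating-sign hypothesis $\nu_{2k}\nu_{2k+2}<0$ flips the sign of this dominant monomial between consecutive subintervals. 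Thus $\triangle(\cdot,\bm\varepsilon)/h^{2}$ takes opposite signs at consecutive test points, and the intermediate value theorem produces $m-1$ zeros of $\triangle(\cdot,\bm\varepsilon)$ in $(0,h_{m-1})$; since the dominant monomial also controls the derivative at each such zero, the zeros are simple, and by standard Poincar\'{e} section arguments each corresponds to a hyperbolic limit cycle of \eqref{e2.35}.

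The main obstacle is showing that the two pieces absorbed into $E$ do not spoil the sign pattern on any of the $m-1$ chosen intervals. The odd-indexed contributions $\nu_{2k+1}(2\pi,\bm\varepsilon)\,h^{2k+1}$ carry an extra factor of $h$ relative to the nearest even-indexed monomial; combined with the magnitude bound inherited from Theorem \ref{t2.4}, they are strictly subordinate at the radii $h_k$. The tail $\sum_{k\ge m+1}\nu_k(2\pi,\bm\varepsilon)\,h^k$ is absorbed by taking $|\bm\varepsilon|$ small enough, since the $h_k$ shrink to zero while the $\nu_k(2\pi,\bm\varepsilon)$ remain bounded. The whole argument is the exact analogue, with the role of the index $2k+1$ replaced by $2k$, of the proof of Theorem \ref{t2.6} in the case $p+q$ even, itself a classical Bautin-type bifurcation scheme adapted to the weighted polar coordinates \eqref{e1.7}.
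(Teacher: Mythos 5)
The paper states Theorem \ref{t2.7} (and likewise Theorems \ref{t2.6}, \ref{t2.8}, \ref{t2.9}) without any proof, so there is no argument of the authors' to measure yours against; these results are quoted as the standard Bautin-type bifurcation scheme for a hierarchy of focal values. Your proposal reconstructs exactly that scheme in the weighted polar coordinates \eqref{e1.7}, and it is essentially sound: the reduction of the odd-indexed coefficients via Theorem \ref{t2.4} (noting that the algebraic-equivalence identities are polynomial in $a_{kj}(\bm\varepsilon)$, $b_{kj}(\bm\varepsilon)$ and hence uniform for $|\bm\varepsilon|\ll1$), the dominance intervals separated by the balance radii, and the absorption of the tail using the lower bound on $|\nu_{2m}(2\pi,\bm\varepsilon)|$ coming from $\nu_{2m}(2\pi,\mathbf{0})\neq0$ are all the right ingredients.

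Two points need tightening. First, the count: with only $m-1$ test points $h_1<\cdots<h_{m-1}$ the intermediate value theorem yields just $m-2$ zeros; you must also use the sign of $\triangle(h,\bm\varepsilon)/h^2$ as $h\to0^+$, which equals $\operatorname{sgn}\nu_2(2\pi,\bm\varepsilon)\neq0$ by \eqref{e2.40}, as the zeroth datum, so that the alternation produces the full $m-1$ sign changes. Second, the claim that the zeros are simple because ``the dominant monomial also controls the derivative'' is not correct as stated: at a zero of $\triangle$ two monomials necessarily balance, so no single one dominates there. Fortunately simplicity is not needed for the conclusion --- since $\triangle(\cdot,\bm\varepsilon)$ is a nonzero analytic function, its positive zeros are isolated, and each isolated positive zero of the successive function already corresponds to a limit cycle of \eqref{e2.35}; either drop the hyperbolicity claim or prove it separately by a Rolle-type argument on the derivatives.
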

\begin{theorem}\label{t2.8}
Suppose $p+q$ is an odd number, if  the origin of system \eqref{e2.35} is a $m-$th weak focus when $\bm\varepsilon=\mathbf{0}$,
and the Jacobi matrix of $\nu_3(2\pi,\bm{\varepsilon}),\nu_5(2\pi,\bm{\varepsilon}),\cdots,\nu_{2m-1}(2\pi,\bm{\varepsilon})$ with respect to $\bm{\varepsilon}$ is
\begin{equation}\label{e2.41}
    J=\frac{\partial(\nu_3,\nu_5,\cdots,\nu_{2m-1})}{\partial(\varepsilon_1,\varepsilon_2,\cdots,\varepsilon_n)}
\end{equation}
There is a $m-1$ order determinant which do not equal to zero when $\bm\varepsilon=\mathbf{0}$, then choosing a proper parameter $\bm\varepsilon$ in $|\bm\varepsilon|\ll1$,there exist $m-1$ limit cycles in the neighborhood of the origin of system \eqref{e2.35}.
\end{theorem}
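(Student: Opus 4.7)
The plan is to reduce Theorem \ref{t2.8} to the preceding existence theorem (Theorem \ref{t2.6}) by exploiting the Jacobian condition to make the first $m-1$ focal values into independent parameters. First, because the origin is an $m$-th weak focus at $\bm\varepsilon=\mathbf 0$, Definition \ref{d2.2} gives
\[
\nu_{3}(2\pi,\mathbf 0)=\nu_{5}(2\pi,\mathbf 0)=\cdots=\nu_{2m-1}(2\pi,\mathbf 0)=0,\qquad \nu_{2m+1}(2\pi,\mathbf 0)\neq 0.
\]
By continuity of $\tilde r(2\pi,h,\bm\varepsilon)$ in $\bm\varepsilon$, for all $\bm\varepsilon$ in a sufficiently small neighborhood $U$ of $\mathbf 0$ the value $\nu_{2m+1}(2\pi,\bm\varepsilon)$ keeps its sign and is bounded below in modulus by some positive constant.

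Second, the hypothesis that some $(m-1)$-st order minor of $J$ does not vanish at $\bm\varepsilon=\mathbf 0$ means, after renumbering the parameters,
\[
\det\frac{\partial(\nu_{3},\nu_{5},\ldots,\nu_{2m-1})}{\partial(\varepsilon_{1},\varepsilon_{2},\ldots,\varepsilon_{m-1})}\bigg|_{\bm\varepsilon=\mathbf 0}\neq 0.
\]
Setting $\varepsilon_{m}=\cdots=\varepsilon_{n}=0$ and applying the implicit function theorem to the map $(\varepsilon_{1},\ldots,\varepsilon_{m-1})\mapsto(\nu_{3},\nu_{5},\ldots,\nu_{2m-1})$, one obtains a local diffeomorphism near the origin; hence any prescribed small vector $(\eta_{1},\ldots,\eta_{m-1})$ can be realized as the corresponding vector of focal values at some $\bm\varepsilon\in U$.

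Third, I would choose the targets $\eta_{k}=(-1)^{\sigma_{k}}\delta^{2(m-k)+1}$ with $\delta>0$ small and signs $\sigma_{k}$ arranged so that $\eta_{k}\eta_{k+1}<0$ for $k=1,\ldots,m-2$ and $\eta_{m-1}\,\nu_{2m+1}(2\pi,\bm\varepsilon)<0$. For $\delta$ small enough this produces
\[
0<|\nu_{3}|\ll|\nu_{5}|\ll\cdots\ll|\nu_{2m-1}|\ll|\nu_{2m+1}|,\qquad \nu_{2k-1}\nu_{2k+1}<0\ (k=2,\ldots,m),
\]
which is precisely the hypothesis of Theorem \ref{t2.6}. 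Invoking that theorem yields the desired $m-1$ small-amplitude limit cycles.

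The main obstacle is coordinating three neighborhoods: the one on which the implicit function theorem gives a diffeomorphism, the neighborhood $U$ on which $|\nu_{2m+1}(2\pi,\bm\varepsilon)|$ is bounded below, and the one on which the ordering required by Theorem \ref{t2.6} holds. Since the $C^{1}$ parametrization delivered by the implicit function theorem gives $\varepsilon_{i}(\bm\eta)=O(\|\bm\eta\|)=O(\delta)$, all three restrictions are met simultaneously by taking $\delta$ sufficiently small; once this bookkeeping is in place, the conclusion reduces to a direct application of the previously established existence theorem.
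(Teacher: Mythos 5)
The paper states Theorem \ref{t2.8} without any proof, so there is no in-text argument to compare yours against; Theorems \ref{t2.6}--\ref{t2.9} are all left as (presumably standard) assertions. Your proposal is the classical Bautin-type argument and, granting Theorem \ref{t2.6} as stated, it is correct and complete: the $m$-th weak focus hypothesis gives $\nu_3(2\pi,\mathbf 0)=\cdots=\nu_{2m-1}(2\pi,\mathbf 0)=0$ and $\nu_{2m+1}(2\pi,\mathbf 0)\neq 0$ by Definition \ref{d2.2}; the nonvanishing $(m-1)$-order minor plus the inverse function theorem lets you realize any sufficiently small target vector of focal values; and the alternating, rapidly decaying choice $\eta_k=(-1)^{\sigma_k}\delta^{2(m-k)+1}$ lands you exactly in the hypotheses \eqref{e2.39} of Theorem \ref{t2.6}. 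Your attention to making the three smallness requirements compatible (the IFT neighborhood, the lower bound on $|\nu_{2m+1}|$, and the ordering \eqref{e2.39}) is the only delicate point and you handle it correctly via $\varepsilon_i(\bm\eta)=O(\delta)$. One remark on the statement rather than the proof: the hypothesis ``$p+q$ is an odd number'' in Theorem \ref{t2.8} must be a typo for ``even,'' since by Theorems \ref{t2.1}--\ref{t2.3} and Definition \ref{d2.1} the odd-indexed quantities $\nu_3,\nu_5,\dots$ are the focal values precisely when $p+q$ is even (the odd case, with even indices, is Theorem \ref{t2.9}); your reduction to Theorem \ref{t2.6}, which is the $p+q$ even statement, implicitly and correctly reads the theorem this way.
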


\begin{theorem}\label{t2.9}
Suppose $p+q$ is an odd number, if the origin of system \eqref{e2.27}is a $m-$th weak focus when $\bm\varepsilon=\mathbf{0}$,
and he Jacobi matrix of $\nu_2(2\pi,\bm{\varepsilon}),\nu_4(2\pi,\bm{\varepsilon}),\cdots,\nu_{2m-2}(2\pi,\bm{\varepsilon})$ with respect to $\bm{\varepsilon}$ is written as
\begin{equation}\label{e2.42}
    J=\frac{\partial(\nu_2,\nu_4,\cdots,\nu_{2m-2})}{\partial(\varepsilon_1,\varepsilon_2,\cdots,\varepsilon_n)}
\end{equation}
There is a $m-1$ order determinant which do not equal to zero when $\bm\varepsilon=\mathbf{0}$, then choosing a proper parameter $\bm\varepsilon$ in $|\bm\varepsilon|\ll1$,there exist $m-1$ limit cycles in the neighborhood of the origin of system \eqref{e2.35}.
\end{theorem}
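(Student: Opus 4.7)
The plan is to reduce Theorem 2.9 to Theorem 2.7 via the implicit function theorem applied to the Jacobian hypothesis, in a manner parallel to the strategy behind Theorem 2.8 but now using the even-indexed focal values $\nu_{2k}(2\pi,\bm\varepsilon)$ appropriate to the $p+q$ odd case.

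First I would observe that because $p+q$ is odd, Theorem 2.4 gives $\nu_{2k+1}(2\pi,\bm\varepsilon)\sim 0$, so the only focal values contributing independent information to the Poincar\'e displacement function $\triangle(h,\bm\varepsilon)=\tilde r(2\pi,h,\bm\varepsilon)-h$ are the even-indexed $\nu_{2k}(2\pi,\bm\varepsilon)$. The hypothesis that the origin is an $m$-th weak focus at $\bm\varepsilon=\mathbf 0$ therefore means
\begin{equation*}
\nu_2(2\pi,\mathbf 0)=\nu_4(2\pi,\mathbf 0)=\cdots=\nu_{2m-2}(2\pi,\mathbf 0)=0,\qquad \nu_{2m}(2\pi,\mathbf 0)\neq 0.
\end{equation*}
By continuous dependence of the solution $\tilde r(\theta,h,\bm\varepsilon)$ on $\bm\varepsilon$, there is an open neighborhood $U$ of $\mathbf 0$ on which $\nu_{2m}(2\pi,\bm\varepsilon)$ keeps the sign it has at $\bm\varepsilon=\mathbf 0$ and stays within a factor two of its initial magnitude.

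Next I would exploit the Jacobian hypothesis. Since $J=\partial(\nu_2,\nu_4,\ldots,\nu_{2m-2})/\partial(\varepsilon_1,\ldots,\varepsilon_n)$ has a nonzero $(m-1)$-order minor at $\bm\varepsilon=\mathbf 0$, after relabeling we may assume this minor is formed from $\varepsilon_1,\ldots,\varepsilon_{m-1}$. Freezing $\varepsilon_m=\cdots=\varepsilon_n=0$, the inverse function theorem makes the map
\begin{equation*}
(\varepsilon_1,\ldots,\varepsilon_{m-1})\longmapsto \bigl(\nu_2(2\pi,\cdot),\nu_4(2\pi,\cdot),\ldots,\nu_{2m-2}(2\pi,\cdot)\bigr)
\end{equation*}
a local diffeomorphism near the origin, so any sufficiently small target tuple $(\alpha_1,\alpha_2,\ldots,\alpha_{m-1})$ is attained by some $\bm\varepsilon\in U$.

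Then, in the spirit of Theorem 2.7, I would choose $(\alpha_1,\ldots,\alpha_{m-1})$ with strictly alternating signs opposite to that of $\nu_{2m}(2\pi,\mathbf 0)$ in the appropriate pattern and with magnitudes obeying $0<|\alpha_1|\ll|\alpha_2|\ll\cdots\ll|\alpha_{m-1}|\ll|\nu_{2m}(2\pi,\mathbf 0)|$; the corresponding $\bm\varepsilon$ then realizes
\begin{equation*}
0<|\nu_2(2\pi,\bm\varepsilon)|\ll|\nu_4(2\pi,\bm\varepsilon)|\ll\cdots\ll|\nu_{2m-2}(2\pi,\bm\varepsilon)|\ll|\nu_{2m}(2\pi,\bm\varepsilon)|,
\end{equation*}
together with $\nu_{2k}(2\pi,\bm\varepsilon)\nu_{2k+2}(2\pi,\bm\varepsilon)<0$ for $k=1,\ldots,m-1$, using the constancy of $\mathrm{sign}\,\nu_{2m}$ on $U$ to secure the last pair. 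Theorem 2.7 then produces $m-1$ limit cycles bifurcating from the origin.

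The only non-routine step is coordinating the two perturbation arguments: the implicit-function step fixes $\bm\varepsilon$ so that the lower focal values take prescribed values, but one must verify that this $\bm\varepsilon$ still lies in the set $U$ on which $\nu_{2m}(2\pi,\bm\varepsilon)$ has not drifted in sign or magnitude. This is handled by first shrinking the target tuple until the preimage under the local diffeomorphism is contained in $U$; the shrinking does not destroy the chain of inequalities, since the scale of $|\nu_{2m}|$ is comparable to its value at $\bm\varepsilon=\mathbf 0$ throughout. With that compatibility checked, the conclusion is immediate from Theorem 2.7.
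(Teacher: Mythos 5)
Your proof is correct. The paper actually states Theorem 2.9 (like Theorems 2.6--2.8) without giving any proof, so there is nothing to compare against line by line; your argument --- use the nonvanishing $(m-1)$-order minor of $J$ and the inverse function theorem to prescribe $\nu_2(2\pi,\bm\varepsilon),\dots,\nu_{2m-2}(2\pi,\bm\varepsilon)$ with alternating signs and the hierarchy $0<|\nu_2|\ll\cdots\ll|\nu_{2m-2}|\ll|\nu_{2m}|$, while continuity keeps $\nu_{2m}(2\pi,\bm\varepsilon)$ of fixed sign and comparable magnitude, then invoke Theorem 2.7 --- is the standard and evidently intended route. Two small points worth recording: you silently (and correctly) read the sign condition in Theorem 2.7 as running over $k=1,\dots,m-1$ rather than the paper's $k=1,\dots,m$, which would otherwise involve the undefined $\nu_{2m+2}$; and your final compatibility check that the chosen $\bm\varepsilon$ remains in the neighborhood where $\nu_{2m}$ has not changed sign is exactly the step that is usually glossed over, so it is good that you made it explicit.
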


\section{Center-focus determination and limit cycle bifurcation for $2:3$ homogeneous weight singular point}
 \setcounter{equation}{0}
Consider the following system
 \begin{equation}\label{e3.1}
    \begin{split}
    &\frac{dx}{dt}=-2y^3+(a_{22}x^2y^2+a_{50}x^5) =-2y^3+\mathcal{X}_{10}=\mathcal{X}(x,y),\\
    &\frac{dy}{dt}=3x^5+(b_{13} xy^3+ b_{41} x^4 y)=3x^5+\mathcal{Y}_{11}=\mathcal{Y}(x,y).
    \end{split}
 \end{equation}
 where
 \begin{equation}\label{e3.2}
\mathcal{X}_{10}=a_{22}x^2y^2+a_{50}x^5,\ \  \mathcal{Y}_{11}=b_{13} xy^3+b_{41} x^4 y.
 \end{equation}
Let
\begin{equation}\label{e3.3}
    x=r^2\cos\theta,\ \ y=r^3\sin\theta,
\end{equation}
we have
\begin{equation}\label{e3.4}
    \frac{dr}{d\theta}=r\ \frac{R_0(\theta)+R_1(\theta)r}
    {Q_0(\theta)+Q_1(\theta)r}.
\end{equation}
where
\begin{equation}\label{e3.5}
       \begin{split}
       &R_0(\theta)= \cos\theta\sin\theta \big(3 \cos^4\theta - 2 \sin^2\theta\big),\ \ \ \ Q_0(\theta)=6 \big(\cos^6\theta + \sin^4\theta\big),\\
       &R_1(\theta)=\cos\theta \big[\cos^3\theta \big(a_{50} \cos^2\theta + b_{41} \sin^2\theta)+\sin^2 \theta(a_{22} \cos^2\theta +
      b_{13} \sin^2\theta\big)\big],\\
      &Q_1(\theta)=-\cos^2\theta \sin\theta \big[\big(3 a_{50} -
      2 b_{41}) \cos^3\theta + \big(3 a_{22} - 2 b_{13}\big) \sin^2\theta\big].
        \end{split}
\end{equation}

 Denote the solution of equation \eqref{e3.4} with initial condition $r|_{\theta=0}=h$ by $r=\sum\limits_{k=1}^{\infty}\nu_k(\theta)h^k$, and
 \begin{equation}\label{e3.6}
    \nu_k(\theta)=\nu_1(\theta)u_k(\theta),\ k=2,3,\cdots,
 \end{equation}
 it is easy to get
 \begin{equation}\label{e3.7}
   \nu_1(\theta)=\frac{1}{\big(\cos^6\theta+\sin^4\theta\big)^{\frac{1}{12}}}\ ,
   \end{equation}
and
 \begin{equation}\label{e3.8}
 \begin{split}
  &u_2'(\theta)= \frac{\cos\theta \big(2 \cos^2\theta +
   3 \sin^2\theta\big)}{36 \big(\cos^6\theta + \sin^4\theta\big)^{\frac{25}{12}}}\\
   \times&\big(3 a_{50} \cos^9\theta +
   3 a_{22} \cos^6\theta \sin^2\theta +
   2 b_{41} \cos^3\theta \sin^4\theta + 2 b_{13} \sin^6\theta\big).
   \end{split}
   \end{equation}
\eqref{e3.8} yields that
   \begin{equation}\label{e3.9}
    \begin{split}
    u_2(\theta)&=\frac{-\cos^2\theta \sin\theta\left(2 b_{41} \cos^3\theta +
   5 b_{13} \sin^2\theta\right)}{60 \left(\cos^6\theta + \sin^4\theta\right)^{\frac{13}{12}}}\\
   +&\frac{1}{24}(2 a_{22} + 3 b_{13})f_2(\theta)+\frac{1}{60}(5 a_{50} + b_{41})g_2(\theta),
    \end{split}
\end{equation}
where
\begin{equation}\label{e3.10}
 \begin{split}
   & f_2(\theta)=\int_0^{\theta}\ \frac{\cos^7\varphi \sin^2\varphi \big(2 \cos^2\varphi + 3 \sin^2\varphi\big)}{\big(\cos^6\varphi + \sin^4\varphi\big)^{\frac{25}{12}}}\ d\varphi,\\
   &g_2(\theta)=\int_0^{\theta}\ \frac{\cos^{10}\varphi \big(2 \cos^2\varphi + 3 \sin^2\varphi\big)}{\big(\cos^6\varphi + \sin^4\varphi\big)^{\frac{25}{12}}}\ d\varphi.
    \end{split}
\end{equation}

\eqref{e3.6} and \eqref{e3.10} show the following proposition.
 \begin{proposition}\label{p3.1}
 The first focal value of system \eqref{e3.1} is
 \begin{equation}\label{e3.11}
    \nu_2(2\pi)=\frac{1}{60}(5 a_{50} + b_{41})\int_0^{2\pi}\ \frac{\cos^{10}\varphi (2 \cos^2\varphi + 3 \sin^2\varphi)}{\left(\cos^6\varphi + \sin^4\varphi\right)^{\frac{25}{12}}}\ d\varphi
 \end{equation}
 \end{proposition}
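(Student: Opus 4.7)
The plan is to specialize the closed-form expression for $u_2(\theta)$ given in (3.9) to $\theta = 2\pi$, then invoke (3.6)--(3.7) which yield $\nu_2(2\pi) = \nu_1(2\pi)\,u_2(2\pi) = u_2(2\pi)$, since $\nu_1(2\pi) = (\cos^6(2\pi) + \sin^4(2\pi))^{-1/12} = 1$. Before plugging in, I would record that because $p+q = 2+3 = 5$ is odd, Definition 2.1 designates $\nu_2(2\pi)$ as the first focal value (consistent with Theorem 2.3, which guarantees $\nu_k(2\pi) \sim 0$ for odd $k>1$ in this parity situation), so verifying (3.11) is precisely what is required.

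Examining the three summands of (3.9) at $\theta = 2\pi$: the first summand carries an explicit factor $\sin\theta$ and so vanishes automatically at $2\pi$. The task therefore reduces to computing the two definite integrals $f_2(2\pi)$ and $g_2(2\pi)$ defined in (3.10). The formula (3.11) claims that only the $g_2$ contribution survives, so the key point is to establish $f_2(2\pi) = 0$.

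The main (and essentially only) nontrivial step is the vanishing of $f_2(2\pi)$. I would exploit the symmetry $\varphi \mapsto \varphi + \pi$: the denominator $(\cos^6\varphi + \sin^4\varphi)^{25/12}$, the factor $\sin^2\varphi$, and the factor $2\cos^2\varphi + 3\sin^2\varphi$ are all $\pi$-periodic, whereas $\cos^7\varphi$ changes sign under this shift. Hence the integrand of $f_2$ is $\pi$-antiperiodic, and splitting $\int_0^{2\pi} = \int_0^{\pi} + \int_{\pi}^{2\pi}$ with the substitution $\varphi = \psi + \pi$ on the second piece yields immediate cancellation. By contrast, the integrand of $g_2$ contains $\cos^{10}\varphi$, which is invariant under $\varphi \mapsto \varphi + \pi$, so no cancellation occurs and the only surviving term is $\frac{1}{60}(5a_{50} + b_{41})\,g_2(2\pi)$, which is exactly (3.11). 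I do not anticipate any genuine obstacle; the sole point that requires care is tracking the parities of all five factors of the integrand in the symmetry check.
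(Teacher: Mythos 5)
Your argument is correct and takes essentially the same route as the paper: the paper derives $u_2(\theta)$ in (3.9)--(3.10) and then simply asserts that these formulas ``show'' Proposition 3.1, leaving implicit exactly the two facts you verify --- that $\nu_1(2\pi)=1$ and the first summand of (3.9) vanishes through its $\sin\theta$ factor, and that $f_2(2\pi)=0$ because $\cos^7\varphi$ is the unique sign-changing factor under $\varphi\mapsto\varphi+\pi$, making that integrand $\pi$-antiperiodic. Your explicit symmetry check for $f_2(2\pi)=0$ supplies the one detail the paper omits, and it is valid.
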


 Suppose the first focal value of origin of system \eqref{e3.1} is zero, namely,
 \begin{equation}\label{e3.12}
    5 a_{50} + b_{41}=0.
 \end{equation}
furthermore, we have
 \begin{equation}\label{e3.13}
    \begin{split}
    &u_3(\theta)=u_2^2(\theta)+\frac{1}{480} (3 a_{22} - 2 b_{13}) (2 a_{22} + 3 b_{13})f_3(\theta)\\
    +&\frac{\cos^4\theta}
    {50400 \big(\cos^6\theta + \sin^4\theta\big)^{\frac{13}{6}}}
    \ \Big[120 (2 a_{22} + 3 b_{13}) b_{41} \cos^9\theta \\-&
 7 (90 a_{22}^2 + 75 a_{22} b_{13} - 90 b_{13}^2 -
    52 b_{41}^2) \cos^6\theta \sin^2\theta\\-&
 40 (15 a_{22} - 23 b_{13}) b_{41} \cos^3\theta \sin^4\theta -
 350 (3 a_{22} - 2 b_{13}) b_{13} \sin^6\theta\Big]
    \end{split}
 \end{equation}
 where
\begin{equation}\label{e3.14}
   f_3(\theta)=\int_0^{\theta}\ \frac{\cos^{15}\varphi \sin\varphi \big(2 \cos^2\varphi + 3 \sin^2\varphi\big)}{\big(\cos^6\varphi + \sin^4\varphi\big)^{\frac{19}{6}}}\ d\varphi
\end{equation}
after more computation, we get
\begin{equation}\label{e3.15}
    u_4(\theta)=-u_2^3(\theta) + 2 u_2(\theta) u_3(\theta) + G_1w_2(\theta) +G_2+
  s_4f_4(\theta) + r_4g_4(\theta)
\end{equation}
where
\begin{equation}\label{e3.16}
\begin{split}
   &G_1 = \frac{\big(2 a_{22} +
     3 b_{13}\big) \cos^4\theta}{1209600 \big(\cos^6\theta + \sin^4\theta\big)^{
    \frac{13}{6}}}\Big[(120 \big(2 a_{22} +
        3 b_{13}\big) b_{41} \cos^9\theta\\
  & -
     7 \big(90 a_{22}^2 + 75 a_{22} b_{13} - 90 b_{13}^2 -
        52 b_{41}^2\big) \cos^6\theta \sin^2\theta\\
        & -
     40 \big(15 a_{22} - 23 b_{13}\big) b_{41} \cos^3\theta \sin^4\theta -
     350 \big(3 a_{22} - 2 b_{13}\big) b_{13} \sin^6\theta\Big],\\
       & G_2 = \frac{\sin^3\theta}{117936000 (\cos\theta^6 + \sin\theta^4)^{\frac{13}{4}}}\sum_{k=0}^5c_k\cos^{3(5-k)}\theta\sin^{2k}\theta.
     \end{split}
\end{equation}
\begin{equation}\label{e3.17}
\begin{split}
   &f_4(\theta)=\int_0^{\theta}\ \frac{\cos^{15}\varphi \sin\varphi \big(2 \cos^2\varphi + 3 \sin^2\varphi\big)}{\big(\cos^6\varphi + \sin^4\varphi\big)^{\frac{19}{6}}} f_2(\theta)\ d\varphi\\
   & g_4(\theta)=\int_0^{\theta}\ \frac{\cos^{20}\varphi \sin^2\varphi \big(2 \cos^2\varphi + 3 \sin^2\varphi\big)}{\big(\cos^6\varphi + \sin^4\varphi\big)^{\frac{17}{4}}}\ d\varphi,\\
   &s_4=\frac{1}{11520}(3 a_{22} - 2 b_{13}) (2 a_{22} + 3 b_{13})^2),\\
   &r_4=\frac{-13}{16800} (5 a_{22} - 3 b_{13}) (2 a_{22} + 3 b_{13}) b_{41}).
   \end{split}
\end{equation}
In \eqref{e3.16}
\begin{equation}\label{e3.18}
    \begin{split}
    &c_0=104 b_{41} (5550 a_{22}^2 + 4365 a_{22} b_{13} -
        5940 b_{13}^2 - 2548 b_{41}^2),\\
        &c_1= 273 (180 a_{22}^3 + 420 a_{22}^2 b_{13} + 45 a_{22} b_{13}^2 - 270 b_{13}^3 + 4104 a_{22} b_{41}^2 -1124 b_{13} b_{41}^2),
        \\
        &c_2= -54600 (10 a_{22}^2 - 33 a_{22} b_{13} + 19 b_{13}^2) b_{41}, \\
        &c_3=182 (3240 a_{22}^3 - 3690 a_{22}^2 b_{13} + 1935 a_{22} b_{13}^2 - 610 b_{13}^3 +
        1872 a_{22} b_{41}^2 +2808 b_{13} b_{41}^2),\\
       & c_4=0,\\
        &c_5=504 (2 a_{22} + 3 b_{13}) (180 a_{22}^2 - 225 a_{22} b_{13} + 70 b_{13}^2 +
        104 b_{41}^2).
        \end{split}
\end{equation}
 Above computation show that
 \begin{proposition}\label{p3.2}
when $\nu_2(2\pi)=0$, the second focal value at origin of system  \eqref{e3.1} is
 \begin{equation}\label{e3.19}
 \begin{split}
    &\nu_4(2\pi)=\frac{-13}{16800} (5 a_{22} - 3 b_{13}) (2 a_{22} + 3 b_{13}) b_{41}\\
    \times&\int_0^{2\pi}\ \frac{\cos^{20}\varphi \sin^2\varphi \big(2 \cos^2\varphi + 3 \sin^2\varphi\big)}{\big(\cos^6\varphi + \sin^4\varphi\big)^{\frac{17}{4}}}\ d\varphi.
    \end{split}
 \end{equation}
 \end{proposition}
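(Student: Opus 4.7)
The plan is to evaluate the explicit expression (3.15) for $u_4(\theta)$ at $\theta=2\pi$ under the hypothesis $\nu_2(2\pi)=0$, which by Proposition 3.1 is equivalent to $5a_{50}+b_{41}=0$. Since $\cos^6(2\pi)+\sin^4(2\pi)=1$, formula (3.7) gives $\nu_1(2\pi)=1$, so by (3.6) we have $\nu_4(2\pi)=u_4(2\pi)$; the task reduces to identifying which of the six summands on the right-hand side of (3.15) contribute at $\theta=2\pi$.

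Two of them vanish at once: $-u_2^3(2\pi)$ and $2u_2(2\pi)u_3(2\pi)$ are both zero because $u_2(2\pi)=\nu_2(2\pi)=0$ by hypothesis. A third, $G_2(2\pi)$, vanishes by inspection of (3.16): its overall prefactor $\sin^3\theta/(\cos^6\theta+\sin^4\theta)^{13/4}$ is zero at $\theta=2\pi$. Thus the only terms that could contribute are $G_1(2\pi)\,w_2(2\pi)$, $s_4 f_4(2\pi)$, and $r_4 g_4(2\pi)$, and the heart of the argument is to show that the first two cancel, leaving only $r_4 g_4(2\pi)$.

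To control the surviving contributions I would first use $\cos(2\pi)=1,\ \sin(2\pi)=0$ to reduce (3.16): only the $\cos^9\theta$ summand inside the bracket of $G_1$ survives, giving $G_1(2\pi)=(2a_{22}+3b_{13})^2 b_{41}/10080$. I would then aim to show $G_1(2\pi)\,w_2(2\pi)+s_4 f_4(2\pi)=0$ by exploiting the recursive origin of $w_2$ and $f_4$: both arise when one substitutes $r=\sum\nu_k(\theta)h^k$ into the ODE (3.4) and integrates the coefficient equations, so their integrands share a common structure involving lower-order $u_j$ together with fractional powers of $\cos^6\varphi+\sin^4\varphi$. Integration by parts, combined with the fact that $\cos^6\theta+\sin^4\theta$ returns to its initial value at $\theta=2\pi$, should collapse the sum to a vanishing boundary term. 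Once this cancellation is secured, substitution of $r_4$ from (3.17) reproduces the stated formula exactly.

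The main obstacle is precisely this cancellation. The integrals $w_2(2\pi)$ and $f_4(2\pi)$ contain fractional exponents of $\cos^6\varphi+\sin^4\varphi$ and do not admit closed-form evaluation, so the identity cannot be verified by computing them separately; it has to be read off from the defining recursion together with the periodicity of the trigonometric integrands. A careful bookkeeping of parity (along the lines of Propositions 2.4 and 2.5) and of the nested-integral structure will be required to separate the contribution that is algebraically equivalent to $5a_{50}+b_{41}$ (and therefore vanishes under the hypothesis) from the genuinely new contribution, which is the $r_4 g_4(2\pi)$ term appearing in the statement.
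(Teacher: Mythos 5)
Your route is the paper's own: take the explicit expression \eqref{e3.15} for $u_4(\theta)$, use $\nu_1(2\pi)=1$ so that $\nu_4(2\pi)=u_4(2\pi)$, and evaluate term by term at $\theta=2\pi$. You dispose of $-u_2^3$, $2u_2u_3$ and $G_2$ correctly, and your value $G_1(2\pi)=(2a_{22}+3b_{13})^2b_{41}/10080$ is right. The problem is that the decisive step --- showing that $G_1(2\pi)w_2(2\pi)$ and $s_4f_4(2\pi)$ contribute nothing --- is exactly the part you leave unproved: you call it ``the main obstacle'' and offer only the hope that integration by parts plus periodicity ``should collapse the sum to a vanishing boundary term.'' That is not an argument, and integration by parts is not the mechanism. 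Moreover the framing as a cancellation \emph{between} the two terms cannot be correct: $s_4=\tfrac{1}{11520}(3a_{22}-2b_{13})(2a_{22}+3b_{13})^2$ contains no factor $b_{41}$, while $G_1(2\pi)$ and the asserted value of $\nu_4(2\pi)$ are both proportional to $b_{41}$; so if the proposition is to hold, $f_4(2\pi)$ must vanish on its own (set $b_{41}=0$ to see this), and then $G_1(2\pi)w_2(2\pi)$ must also vanish on its own.

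What actually kills these terms is a reflection (parity) argument on the integrands, which you gesture at but do not carry out. From \eqref{e3.10}, $f_2'(\varphi)$ carries the factor $\cos^7\varphi$ and is otherwise invariant under $\varphi\mapsto\pi-\varphi$ and $\varphi\mapsto 3\pi-\varphi$; hence $f_2'$ is odd about $\pi/2$ and $3\pi/2$, which gives both $f_2(2\pi)=0$ and the symmetries $f_2(\pi-\varphi)=f_2(\varphi)$ on $[0,\pi]$, $f_2(3\pi-\varphi)=f_2(\varphi)$ on $[\pi,2\pi]$. The integrand of $f_4$ in \eqref{e3.17} is $f_2(\varphi)$ times a factor containing $\cos^{15}\varphi\sin\varphi$, which is odd about both $\pi/2$ and $3\pi/2$ while $f_2$ and the remaining factors are even about them; therefore $\int_0^{\pi}=\int_{\pi}^{2\pi}=0$ and $f_4(2\pi)=0$. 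The same reflection argument (or simply the hypothesis $u_2(2\pi)=\nu_2(2\pi)=0$, depending on whether the undefined symbol $w_2$ in \eqref{e3.15} denotes $f_2$ or $u_2$) annihilates $G_1(2\pi)w_2(2\pi)$. Only $r_4g_4(2\pi)$ survives --- its integrand is nonnegative, so no such symmetry removes it --- and substituting $r_4$ from \eqref{e3.17} gives \eqref{e3.19}. Until you supply this reflection computation your proof has a hole at its central claim; with it supplied, your argument coincides with the paper's.
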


The same method could be used to get $u_5(\theta),u_6(\theta)$,  then

 \begin{proposition}\label{p3.3}
 When $\nu_2(2\pi)=\nu_4(2\pi)=0$, the third focal value at origin of system  \eqref{e3.1} is
 \begin{equation}\label{e3.20}
   \nu_6(2\pi)=\frac{(2 a_{22} + 3 b_{13})^2 b_{41}^3}{412356420000}(575803A-11848200B),
 \end{equation}
where
 \begin{equation}\label{e3.21}
 \begin{split}
  &A=575803\int_0^{2\pi}\ \frac{\cos^{36}\varphi \big(2 \cos^2\varphi + 3 \sin^2\varphi\big)}{\big(\cos^6\varphi + \sin^4\varphi\big)^{\frac{77}{12}}}\ d\varphi\\
  & B=11848200\int_0^{2\pi}\ \frac{\cos^{28}\varphi\sin\varphi  \big(2 \cos^2\varphi + 3 \sin^2\varphi\big)}{\big(\cos^6\varphi + \sin^4\varphi\big)^{\frac{16}{3}}}f_2(\varphi)\ d\varphi
    \end{split}
 \end{equation}
 \end{proposition}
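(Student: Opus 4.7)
The plan is to continue the iterative scheme from Propositions 3.1--3.2 by two more orders, imposing the cascade conditions at each stage, and then read off $\nu_6(2\pi)=u_6(2\pi)$ (using $\nu_1(2\pi)=1$ from (3.7)). First I would impose $b_{41}=-5a_{50}$, which is exactly $\nu_2(2\pi)=0$ by Proposition 3.1, and substitute this into the expressions (3.13)--(3.18) for $u_3$ and $u_4$. Then I would write down the recursion obtained by inserting $r=\nu_1(\theta)\sum_{k\ge 1}u_k(\theta)h^k$ with $u_1\equiv 1$ into equation (3.4); matching coefficients of $h^k$ produces linear first-order ODEs
\begin{equation*}
u_k'(\theta)=\Phi_k\bigl(\theta;\,u_2(\theta),\dots,u_{k-1}(\theta)\bigr),
\end{equation*}
whose right-hand sides are rational in $\cos\theta$, $\sin\theta$ and in $(\cos^6\theta+\sin^4\theta)^{1/12}$, with coefficients polynomial in the three remaining parameters $a_{22}$, $b_{13}$, $a_{50}$.

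Next I would integrate $u_5'$ and $u_6'$ from $0$ to $2\pi$. By Theorem 2.4, since $p+q=5$ is odd, the odd-indexed value $\nu_5(2\pi)$ is automatically in the ideal $\langle\nu_2(2\pi),\nu_4(2\pi)\rangle$, so what matters from $u_5$ is only its dependence on $\theta$ as an ingredient of $u_6'$, not its terminal value; this lets me discard many contributions early. For $u_6(2\pi)$ itself, I would split the integrand into exact differentials --- whose period integrals vanish --- and non-elementary remainders of the canonical shape $\int_0^{2\pi}\cos^a\varphi\sin^b\varphi\,(2\cos^2\varphi+3\sin^2\varphi)\,(\cos^6\varphi+\sin^4\varphi)^{-c}\,d\varphi$ already met as $g_2,f_2,f_3,f_4,g_4$. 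Reducing the outcome modulo $\nu_4(2\pi)$ via Definition 2.3 --- using the factorization $\nu_4\propto(5a_{22}-3b_{13})(2a_{22}+3b_{13})b_{41}$ of Proposition 3.2 --- collapses the parameter prefactor to $(2a_{22}+3b_{13})^2\,b_{41}^3$ and leaves exactly the two transcendental integrals $A$ and $B$ of (3.21).

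The main obstacle is algebraic bookkeeping rather than any conceptual subtlety. The expression (3.15)--(3.18) for $u_4(\theta)$ already spans several lines with six angular coefficients $c_0,\dots,c_5$; the analogous $u_6$ will involve dozens of such terms, with numerators of high degree in $(\cos\theta,\sin\theta)$ and denominators that are integer powers of $(\cos^6\theta+\sin^4\theta)^{1/12}$. Recognizing which combinations are total derivatives, which are expressible through the previously defined auxiliary integrals, and which are removable modulo the ideal generated by $(5a_{50}+b_{41})$ and $(5a_{22}-3b_{13})(2a_{22}+3b_{13})b_{41}$ essentially demands a computer algebra system, as was already implicit in the derivation of $u_4$. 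Once this bookkeeping is carried out, the large integer coefficients $575803$, $11848200$ and $412356420000$ appear naturally as the least common denominators of the resulting rational contributions, and the stated formula (3.20) follows.
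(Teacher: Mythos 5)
Your proposal takes essentially the same route as the paper: the paper's own justification of Proposition 3.3 is only the sentence ``the same method could be used to get $u_5(\theta),u_6(\theta)$'' together with the computer evaluation in (3.22), i.e.\ exactly the continuation of the recursion (3.6)--(3.18) with reduction modulo $\nu_2(2\pi)$ and $\nu_4(2\pi)$ that you describe. In both your write-up and the paper the decisive symbolic computation is asserted rather than exhibited, so there is nothing further to compare.
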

 With the aid of computer, we have
 \begin{equation}\label{e3.22}
    575803A-11848200B=814653.251446\cdots>0,
 \end{equation}
From Proposition \ref{p3.1}$\sim$Proposition \ref{p3.3}, we can conclude that
\begin{theorem}\label{t3.1}
The first three \eqref{e3.1} focal values at origin of system  \eqref{e3.1} are
\begin{equation}\label{e3.23}
    \begin{split}
   & V_2=5 a_{50} + b_{41},\\
   &V_4= -(5 a_{22} - 3 b_{13}) (2 a_{22} + 3 b_{13}) b_{41},\\
   &V_6=(2 a_{22} + 3 b_{13})^2 b_{41}^3.
    \end{split}
\end{equation}
\end{theorem}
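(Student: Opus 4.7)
The plan is to read off the three focal values directly from Propositions~\ref{p3.1}, \ref{p3.2}, and \ref{p3.3}, and then invoke the algebraic-equivalence relation of Definition~\ref{d2.3} to strip off the nonzero scalar factors accompanying the polynomial parts in $(a_{kj},b_{kj})$.

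First, Proposition~\ref{p3.1} represents $\nu_2(2\pi)$ as $\frac{1}{60}(5a_{50}+b_{41})\,I_1$ with
\[
I_1=\int_0^{2\pi}\frac{\cos^{10}\varphi\,(2\cos^2\varphi+3\sin^2\varphi)}{(\cos^6\varphi+\sin^4\varphi)^{25/12}}\,d\varphi.
\]
The integrand is nonnegative on $[0,2\pi]$ and strictly positive on a set of full measure, so $I_1>0$. Consequently $\nu_2(2\pi)$ and $V_2=5a_{50}+b_{41}$ vanish together and differ only by the nonzero constant $I_1/60$, which yields $\nu_2(2\pi)\sim V_2$ in the sense of Definition~\ref{d2.3}.

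Next, assuming $V_2=0$ (equivalently $\nu_2(2\pi)=0$), Proposition~\ref{p3.2} expresses $\nu_4(2\pi)$ as $\frac{-13}{16800}(5a_{22}-3b_{13})(2a_{22}+3b_{13})b_{41}\,I_2$, where the integral $I_2$ appearing there again has a nonnegative integrand with positive integral and hence is a strictly positive constant. By Definition~\ref{d2.3}, absorbing the positive factor $-\frac{13 I_2}{16800}$ into the equivalence (with a compensating sign) gives $\nu_4(2\pi)\sim V_4=-(5a_{22}-3b_{13})(2a_{22}+3b_{13})b_{41}$ modulo $\nu_2(2\pi)$.

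Finally, under $V_2=V_4=0$, Proposition~\ref{p3.3} gives
\[
\nu_6(2\pi)=\frac{(2a_{22}+3b_{13})^2 b_{41}^3}{412356420000}\,(575803A-11848200B).
\]
The principal obstacle is confirming that the scalar factor $575803A-11848200B$ is nonzero: neither $A$ nor $B$ admits a simple closed-form antiderivative, so the verification is done by high-precision numerical quadrature, yielding the value $\approx 814653.25>0$ as recorded in \eqref{e3.22}. Positivity of this factor upgrades Proposition~\ref{p3.3} into the equivalence $\nu_6(2\pi)\sim V_6=(2a_{22}+3b_{13})^2 b_{41}^3$ modulo the ideal generated by $\nu_2(2\pi)$ and $\nu_4(2\pi)$, and the identification of the first three focal values is complete.
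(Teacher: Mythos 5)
Your proposal is correct and follows the paper's own route exactly: the paper derives Theorem~\ref{t3.1} directly from Propositions~\ref{p3.1}--\ref{p3.3} by discarding the positive integral factors (using the numerical evaluation \eqref{e3.22} for the third one) under the algebraic equivalence of Definition~\ref{d2.3}. You merely make explicit the positivity of the integrands, which the paper leaves implicit.
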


\begin{theorem}\label{t3.2}
The origin of system \eqref{e3.1} is a center if and only if one of the following conditions holds:
\begin{equation}\label{e3.24}
    \begin{split}
    &C_1:\ \ a_{50}=-\frac{1}{5}b_{41},\ a_{22}=-\frac{3}{2}b_{13};\\
    &C_2:\ \ a_{50}=-\frac{1}{5}b_{41},\ b_{41}=0.
    \end{split}
\end{equation}
\end{theorem}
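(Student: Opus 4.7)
The plan is to prove the two directions of the equivalence separately: necessity via Theorem~\ref{t3.1} and Definition~\ref{d2.3}, then sufficiency by exhibiting a Hamiltonian first integral in case $C_1$ and a time-reversing symmetry in case $C_2$. For necessity, a center forces $\nu_k(2\pi)=0$ for every $k\geq 2$, which by algebraic equivalence forces the explicit representatives $V_2,V_4,V_6$ to vanish in turn. From $V_2=5a_{50}+b_{41}=0$ I solve $a_{50}=-\frac{1}{5}b_{41}$; then $V_6=(2a_{22}+3b_{13})^2 b_{41}^3=0$ splits into two cases: either $2a_{22}+3b_{13}=0$, which together with $V_2=0$ is exactly $C_1$, or $b_{41}=0$, which forces $a_{50}=0$ and gives $C_2$. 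In both cases $V_4=-(5a_{22}-3b_{13})(2a_{22}+3b_{13})b_{41}$ vanishes automatically because one of its factors is annihilated, so $C_1\cup C_2$ is the full common zero locus.

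For sufficiency under $C_1$, substituting $a_{22}=-\frac{3}{2}b_{13}$ and $a_{50}=-\frac{1}{5}b_{41}$ into
\[
\partial_x\mathcal{X}+\partial_y\mathcal{Y}=(2a_{22}+3b_{13})\,xy^2+(5a_{50}+b_{41})\,x^4
\]
shows that the divergence vanishes identically, so the system is Hamiltonian. Integrating $-\mathcal{X}$ in $y$ and reconciling with $\mathcal{Y}$ yields the first integral
\[
H(x,y)=\frac{1}{2}x^6+\frac{1}{2}y^4+\frac{1}{2}b_{13}\,x^2y^3+\frac{1}{5}b_{41}\,x^5y.
\]
Its leading quasi-homogeneous piece $\frac{1}{2}(x^6+y^4)$ of weight $12$ is strictly positive off the origin, while the remaining two terms are of weight $13$ and therefore of smaller order near $0$; hence $H$ attains a strict local minimum at the origin. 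Every small level set $\{H=c\}$ is then a closed curve surrounding $0$, and the monodromy provided by the polar reduction \eqref{e3.4} forces the flow to traverse it, so the origin is a center.

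For sufficiency under $C_2$, i.e.\ $a_{50}=b_{41}=0$, the system reduces to
\[
\dot{x}=-2y^3+a_{22}\,x^2y^2,\qquad \dot{y}=3x^5+b_{13}\,xy^3,
\]
in which $\dot{x}$ is an even polynomial and $\dot{y}$ is an odd polynomial in $x$. Consequently the involution $(x,y,t)\mapsto(-x,y,-t)$ leaves the system formally unchanged: this is a time-reversing symmetry whose fixed set is the $y$-axis, a curve through the origin. Since the origin is monodromic under the $p{:}q$-weighted polar setup, any orbit meeting the $y$-axis close to the origin is carried to itself by the symmetry and therefore must close up, yielding a center.

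The main obstacle I anticipate is the sufficiency under $C_2$. The system there is not Hamiltonian in general---the divergence $(2a_{22}+3b_{13})xy^2$ does not vanish---and the natural Darboux ans\"atze for an integrating factor, e.g.\ monomials $x^\alpha y^\beta$ or powers $(x^6+y^4)^\gamma$, all collapse to the further constraint $2a_{22}+3b_{13}=0$, which only recovers a subcase of $C_1$. The clean resolution is the $y$-axis time-reversibility, which is available precisely because $a_{50}$ and $b_{41}$---the only coefficients in $\mathcal{X}_{10}$ and $\mathcal{Y}_{11}$ that break $x$-parity---both vanish under $C_2$.
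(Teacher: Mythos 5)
Your proposal is correct and follows essentially the same route as the paper: necessity is read off from the focal values $V_2,V_4,V_6$ of Theorem~\ref{t3.1}, and sufficiency comes from the system being Hamiltonian under $C_1$ and time-reversible under $C_2$. The only substantive difference is that you supply the details the paper omits (the explicit first integral $H$ and the explicit involution), and you correctly identify the reversing symmetry in case $C_2$ as $(x,y,t)\mapsto(-x,y,-t)$, i.e.\ reflection in the $y$-axis, whereas the paper loosely calls it symmetry ``with the $x$ axis.''
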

\begin{proof}
Theorem \ref{t3.1} yields that necessary condition holds. On the other hand, if condition $C_1$ in Theorem \ref{t3.2} holds, system \eqref{e3.1} is Hamilton; if condition $C_2$ in Theorem \ref{t3.2} holds, system \eqref{e3.1} is symmetric with $x$ axis, so the sufficient condition hold.
\end{proof}

Suppose the coefficients of functions at right hand of system  \eqref{e3.1} are
\begin{equation}\label{e3.25}
    b_{41}=1,\ a_{50}=-\frac{1}{5}(1-\varepsilon_1),\ a_{22}=\frac{1}{35}(5+7\varepsilon_2),\ b_{13}=\frac{5}{21},
\end{equation}
Theorem\ref{t3.1} shows that the first three \eqref{e3.1} focal values at origin of system  \eqref{e3.1} are
\begin{equation}\label{e3.26}
    V_2=\varepsilon_1,\ V_4|_{\varepsilon_1=0}=-\varepsilon_2+o(\varepsilon_2),\ V_6|_{\varepsilon_1=\varepsilon_2=0}=1,
\end{equation}
Furthermore, Theorem\ref{t2.5} yields that
\begin{theorem}\label{t3.3}
If\eqref{e3.25} holds, the origin of system \eqref{e3.1} is an unstable $3-$th weak focus when $0<\varepsilon_1\ll\varepsilon_2\ll1$, there exist two limit cycles in the neighborhood of origin of system \eqref{e3.1}.
\end{theorem}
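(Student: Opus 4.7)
The proof plan is to verify the hypotheses of Theorem~\ref{t2.7} with $m=3$ and read off its conclusion. Since $p+q=5$ is odd for system~\eqref{e3.1}, Theorem~\ref{t2.7} is the relevant perturbation result, and Theorem~\ref{t3.1} already supplies the three focal values $V_2,V_4,V_6$ (algebraically equivalent, via the nonzero definite integrals in Propositions~\ref{p3.1}--\ref{p3.3}, to $\nu_2(2\pi),\nu_4(2\pi),\nu_6(2\pi)$).

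The first step is a direct substitution of~\eqref{e3.25} into the three formulas of Theorem~\ref{t3.1}. A short calculation gives
\begin{equation*}
5a_{50}+b_{41}=\varepsilon_1,\qquad 5a_{22}-3b_{13}=\varepsilon_2,\qquad 2a_{22}+3b_{13}=1+\tfrac{2}{5}\varepsilon_2,
\end{equation*}
from which~\eqref{e3.26} follows. In particular $V_6|_{\bm\varepsilon=\mathbf{0}}=1\ne 0$, so at the unperturbed parameter the origin satisfies $V_2=V_4=0$ and $V_6\ne 0$, hence is a $3$-rd weak focus in the sense of Definition~\ref{d2.2}; the numerical bound~\eqref{e3.22} forces $\nu_6(2\pi)|_{\bm\varepsilon=\mathbf{0}}>0$, so this focus is unstable.

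The second step is to exhibit a perturbation satisfying the magnitude and sign conditions of Theorem~\ref{t2.7}. For $0<\varepsilon_1\ll\varepsilon_2\ll 1$ one has $V_2=\varepsilon_1>0$, $V_4=-\varepsilon_2+O(\varepsilon_2^2)<0$, and $V_6=1+O(|\bm\varepsilon|)>0$, so the sign alternation $\nu_2\nu_4<0$, $\nu_4\nu_6<0$ is automatic and the magnitude hierarchy $0<|\nu_2|\ll|\nu_4|\ll|\nu_6|$ is inherited from $\varepsilon_1\ll\varepsilon_2\ll 1$ through the nonzero proportionality constants. Theorem~\ref{t2.7} then delivers $m-1=2$ limit cycles in a neighborhood of the origin.

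The main obstacle lies not in new analysis but in controlling the sign of each proportionality constant between $V_{2k}$ and $\nu_{2k}(2\pi)$: for $V_2$ and $V_4$ the integrands in~\eqref{e3.11} and~\eqref{e3.19} are of definite sign on a set of positive measure (giving nonzero integrals), while for $V_6$ it is precisely the numerical evaluation~\eqref{e3.22} that does the job. Once these facts are in hand, the argument reduces to the substitution above and an appeal to Theorem~\ref{t2.7}.
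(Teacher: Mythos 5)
Your proposal is correct and follows essentially the same route as the paper: substitute \eqref{e3.25} into the focal values of Theorem \ref{t3.1} to obtain \eqref{e3.26}, check that the unperturbed origin is an unstable third-order weak focus (the positivity in \eqref{e3.22} and the sign-definite integrals in \eqref{e3.11}, \eqref{e3.19} give the positive proportionality constants), and then invoke the perturbation theorem for $p+q$ odd to produce $m-1=2$ limit cycles. Your only divergence is that you correctly cite Theorem \ref{t2.7} where the paper's text points to Theorem \ref{t2.5}, which is evidently a typographical slip, and you make explicit the sign/magnitude verification that the paper leaves implicit.
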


Meanwhile, the origin of system \eqref{e3.1} is a high order singular point, the multiple degree of higher order singular point was defined in \cite{liu-1999} The origin of \eqref{e3.1} could be broken into some singular point with lower multiple degree, and limit cycles could be bifurcated from the new singular point. Similar problem have been investigated in \cite{liu-2009},\cite{liu-2015}. We consider a perturbed system of system \eqref{e3.1}
\begin{equation}\label{e3.27}
    \begin{split}
    &\frac{dx}{dt}=-\delta_0 \sigma^8 x - \sigma^6 y + \frac{1}{2} (5 a_{50} + b_{41} + 4 \delta_1 + 8 \delta_2) \sigma^2 x y^2 -
  2 y^3 + x^2 (a_{50} x^3 + a_{22} y^2),\\
  &\frac{dy}{dt}=\sigma^8 x - \delta_0 \sigma^8 y -\frac{1}{2}  (5 a_{50} + b_{41} - 4 \delta_1 + 8 \delta_2) \sigma^4 x^2 y +
 3 x^5 + x y (b_{41} x^3 + b_{13} y^2),
    \end{split}
\end{equation}
where $\delta_0,\delta_1,\delta_2$ and $\sigma$ are small parameters.\par
System \eqref{e3.27} is system \eqref{e3.1} when $\sigma=0$.\par
If $\sigma\neq0$,the origin of system \eqref{e3.27}is an elementary focus. In order to study its Hopf bifurcation,
system \eqref{e3.27} can be transformed into system
\begin{equation}\label{e3.29}
    \begin{split}
    &\frac{d\xi}{d\tau}=-\delta_0 \sigma \xi - \eta + \frac{1}{2} (5 a_{50} + b_{41} + 4 \delta_1 + 8 \delta_2) \sigma\xi \eta^2 -
  2 \eta^3 + \sigma\xi^2 (a_{50} \xi^3 + a_{22} \eta^2),\\
  &\frac{d\eta}{d\tau}= \xi - \delta_0 \sigma \eta -\frac{1}{2}  (5 a_{50} + b_{41} - 4 \delta_1 + 8 \delta_2) \sigma \xi^2 \eta +
 3 \xi^5 +  \sigma\xi \eta(b_{41} \xi^3 + b_{13} \eta^2).
    \end{split}
\end{equation}
 by transformation
\begin{equation}\label{e3.28}
    x=\sigma^2\xi,\ y=\sigma^3\eta,\ d\tau=\sigma^7dt.
\end{equation}
Furthermore, we can give that
\begin{theorem}\label{t3.4}
The divergence at origin of system \eqref{e3.29} is $\lambda_0=-2\delta_0\sigma$. When $\delta_0=0$, the first three \eqref{e3.29} focal values at origin of system  \eqref{e3.29} are
\begin{equation}\label{e3.30}
    \lambda_1=\delta_1\sigma,\ \ \lambda_2|_{\delta_1=0}=-\delta_2\sigma,\ \lambda_3|_{\delta_1=\delta_2=0}=\frac{47}{128}(5a_{50}+b_{41})\sigma.
\end{equation}
\end{theorem}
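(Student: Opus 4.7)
The plan is first to read off the divergence from the linear part of \eqref{e3.29}, and then to apply the focal-value algorithm of Section~3 (specialized to $p=q=1$) three times in succession, using Theorem~\ref{t2.1} at each stage to skip even-indexed coefficients and Definition~\ref{d2.3} to set previously computed focal values to zero before computing the next.

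For $\lambda_0$, the Jacobian of the right-hand side of \eqref{e3.29} at the origin is $\bigl(\begin{smallmatrix}-\delta_0\sigma & -1\\ 1 & -\delta_0\sigma\end{smallmatrix}\bigr)$, whose trace is $-2\delta_0\sigma$. Setting $\delta_0=0$, the origin is an elementary center--focus. I would apply the polar substitution $\xi=\rho\cos\phi$, $\eta=\rho\sin\phi$ (i.e.\ \eqref{e1.7} with $p=q=1$), obtain an equation of the form \eqref{e1.17}, and write its solution with $\rho|_{\phi=0}=h$ as $\rho(\phi,h)=\sum_{k\ge 1}\mu_k(\phi)h^k$. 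Since $p+q=2$ is even, Theorem~\ref{t2.1} gives $\mu_{2k}(2\pi)\sim 0$, so the focal values to compute are $\mu_3(2\pi), \mu_5(2\pi), \mu_7(2\pi)$.

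For $\lambda_1$, only the cubic terms of \eqref{e3.29} contribute, namely
\begin{equation*}
P_3=\tfrac{\sigma}{2}(5a_{50}+b_{41}+4\delta_1+8\delta_2)\,\xi\eta^2-2\eta^3,\qquad
Q_3=-\tfrac{\sigma}{2}(5a_{50}+b_{41}-4\delta_1+8\delta_2)\,\xi^2\eta.
\end{equation*}
Substituting into the polar equation and integrating $\cos\phi\,P_3+\sin\phi\,Q_3$ over $[0,2\pi]$ (exactly as in the derivation of \eqref{e3.11}), only the weights $\int\cos^4,\int\cos^2\sin^2,\int\sin^4$ survive: the symmetric combination $5a_{50}+b_{41}+8\delta_2$ cancels, leaving a nonzero multiple of $\delta_1\sigma$, which under algebraic equivalence gives $\lambda_1=\delta_1\sigma$. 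For $\lambda_2$, set $\delta_1=0$ and iterate one step further (as in the passage from Proposition~\ref{p3.1} to Proposition~\ref{p3.2}): the leftover $5a_{50}+b_{41}+8\delta_2$ now couples with the quartic terms $\sigma a_{22}\xi^2\eta^2$ and $\sigma b_{13}\xi\eta^3$, and the $\mu_5(2\pi)$ integral reduces (modulo $\lambda_1=0$) to a nonzero multiple of $-\delta_2\sigma$. For $\lambda_3$, set $\delta_1=\delta_2=0$ and iterate to order $7$: the quintic terms $\sigma a_{50}\xi^5$, $3\xi^5$, $\sigma b_{41}\xi^4\eta$ now enter through the iterative scheme, and the various beta-type integrals of $\cos^i\phi\sin^j\phi$ over $[0,2\pi]$ collapse to a single factor $5a_{50}+b_{41}$ with rational prefactor $47/128$.

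The main obstacle is the bulk of the symbolic computation at the seventh-order step: one must expand $\rho(\phi,h)$ to order $h^7$ and evaluate the iterated integrals for $u_2(\phi),\dots,u_7(\phi)$, just as in Propositions~\ref{p3.1}--\ref{p3.3}, and this is best delegated to a computer algebra system. Conceptually, however, the proof reduces to Theorem~\ref{t2.1} (even-indexed coefficients vanish algebraically, so they need never be evaluated) together with Definition~\ref{d2.3} (algebraic equivalence permits zeroing out preceding focal values), which together reduce the task to exactly three successive Liapunov-quantity calculations.
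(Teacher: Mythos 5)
Your proposal is correct and follows exactly the route the paper intends: the paper in fact states Theorem \ref{t3.4} without an explicit proof, the implied argument being precisely the trace computation for $\lambda_0$ followed by the standard successive Lyapunov-quantity calculation for the elementary focus (the $p=q=1$ specialization of Section 2, where Theorem \ref{t2.1} kills the even-indexed coefficients). Your hand check of $\lambda_1$ (the $\pm 4\delta_1$ antisymmetry surviving the $\cos^2\phi\sin^2\phi$ average while $5a_{50}+b_{41}+8\delta_2$ cancels) is right, and deferring the order-$5$ and order-$7$ integrations to a computer algebra system is consistent with what the authors themselves do.
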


Theorem\ref{t3.3} and Theorem\ref{t3.4} show that
\begin{theorem}\label{t3.5}
If \eqref{e3.25} holds, when
\begin{equation}\label{e3.31}
    0<|\sigma|\ll1,\ 0<\delta_0\ll\delta_1\ll\delta_2\ll\varepsilon_1\ll\varepsilon_2\ll1,
\end{equation} there exist five limit cycles in the neighborhood of origin of system \eqref{e3.27}.
\end{theorem}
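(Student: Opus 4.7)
\medskip\noindent\textbf{Proof proposal for Theorem \ref{t3.5}.}
The plan is to split the five limit cycles into two groups of different sizes: two of them come from the degenerate weight focus treated in Theorem \ref{t3.3}, and the remaining three come from a classical Hopf bifurcation at the elementary focus that is unfolded by $\sigma$. The whole argument is built on the observation that system \eqref{e3.27} reduces to system \eqref{e3.1} when $\sigma=0$, while for $\sigma\neq 0$ the scaling \eqref{e3.28} converts \eqref{e3.27} into the system \eqref{e3.29} whose origin is an \emph{elementary} focus. Hence two bifurcation mechanisms act on different spatial scales, and we will combine them.

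\smallskip
First I would fix the coefficients according to \eqref{e3.25} with $0<\varepsilon_1\ll\varepsilon_2\ll 1$ and set $\sigma=\delta_0=\delta_1=\delta_2=0$. Theorem \ref{t3.3} then provides two limit cycles $\Gamma_1,\Gamma_2$ of system \eqref{e3.1} in a neighborhood of the origin. Since they arise from a Bautin-type perturbation with $V_6\neq 0$ and proper alternation of $V_2$ and $V_4$, they are hyperbolic (simple zeros of the displacement function built from the nonzero focal value $V_6$). By smooth dependence of the right-hand sides of \eqref{e3.27} on $\sigma$ and $\delta_0,\delta_1,\delta_2$, and by the implicit function theorem applied to the Poincar\'{e} map at these hyperbolic cycles, $\Gamma_1$ and $\Gamma_2$ persist for all sufficiently small $|\sigma|$ and $|\delta_i|$, and they lie at radii bounded away from $0$ by a constant depending only on $\varepsilon_1,\varepsilon_2$.

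\smallskip
Next I would produce three additional, much smaller cycles via the scaling \eqref{e3.28}, which turns \eqref{e3.27} into \eqref{e3.29} with the origin a standard elementary focus. By Theorem \ref{t3.4}, the linear trace gives $\lambda_0=-2\delta_0\sigma$ and the subsequent focal values reduce, under the successive vanishing of the previous ones, to $\lambda_1=\delta_1\sigma$, $\lambda_2=-\delta_2\sigma$ and $\lambda_3=\frac{47}{128}(5a_{50}+b_{41})\sigma$. Under \eqref{e3.25} we have $5a_{50}+b_{41}=\varepsilon_1>0$, so $\lambda_3\neq 0$. Choosing the ordering \eqref{e3.31} makes $\lambda_0,\lambda_1,\lambda_2,\lambda_3$ alternate in sign and satisfy $|\lambda_0|\ll|\lambda_1|\ll|\lambda_2|\ll|\lambda_3|$, which is precisely the Bautin criterion at an elementary focus; this produces three small-amplitude limit cycles of \eqref{e3.29} near its origin, of amplitudes $O(|\delta_2|^{1/6})$ or smaller. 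Pulling back by \eqref{e3.28} gives three limit cycles of \eqref{e3.27} contained in a region of size $O(\sigma^{2})$ around the origin in $(x,y)$.

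\smallskip
Finally I would verify the scale separation. The three ``inner'' cycles lie in a region whose size tends to $0$ as $\sigma\to 0$, whereas the two ``outer'' cycles $\Gamma_1,\Gamma_2$ survive at radii determined by $\varepsilon_1,\varepsilon_2$ alone and bounded below independently of $\sigma$. Therefore the two groups are disjoint for $\sigma$ small enough, no cycle is counted twice, and no collision or merging occurs; in total we obtain $2+3=5$ limit cycles in the neighborhood of the origin of \eqref{e3.27}. The main delicate point I expect is the persistence step: one must confirm that the cycles of Theorem \ref{t3.3} are simple zeros of the corresponding return map (hyperbolic limit cycles of multiplicity one), since only then does their survival under the $\sigma$-$\delta_i$ perturbation follow immediately from the implicit function theorem; everything else is a direct application of the classical Hopf/Bautin mechanism to the elementary focus of \eqref{e3.29} given by Theorem \ref{t3.4}.
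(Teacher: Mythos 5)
Your proposal is correct and follows exactly the argument the paper intends: the paper's entire justification is the single line ``Theorem \ref{t3.3} and Theorem \ref{t3.4} show that,'' i.e.\ two large-amplitude cycles surviving from the weight-degenerate focus of \eqref{e3.1} plus three small-amplitude cycles from the Hopf/Bautin bifurcation at the elementary focus of \eqref{e3.29}, separated by the scale $\sigma$. Your write-up in fact supplies the details (hyperbolicity and persistence of the two outer cycles, sign alternation and ordering of $\lambda_0,\dots,\lambda_3$, and the scale separation) that the paper omits.
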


\end{document}